\documentclass[11pt]{amsart}
\usepackage[T1]{fontenc}
\usepackage[utf8]{inputenc}
\usepackage{amsmath,amssymb,amsthm}
\usepackage[margin=1.1in]{geometry}
\usepackage{mathtools}
\usepackage{amsaddr}

\theoremstyle{definition}
\newtheorem{theorem}{Theorem}[section]
\newtheorem{lemma}[theorem]{Lemma}
\newtheorem{proposition}[theorem]{Proposition}

\newtheorem{remark}[theorem]{Remark}

\newtheorem{sublemma}[theorem]{Sublemma}

\newcommand{\W}[1]{W^{#1,1}}
\newcommand{\Lop}[1]{\mathcal{L}_{T_{\varepsilon,#1}}}
\newcommand{\Lz}{\mathcal{L}_{T_{0}}}
\newcommand{\Le}{\mathcal{L}_{\varepsilon}}
\newcommand{\norm}[1]{\left\| #1 \right\|}
\newcommand{\bvt}{\operatorname{BV}_{1,1/2}}
\newcommand{\bv}{\operatorname{BV}_{1,1/p}}
\newcommand{\LT}{\mathcal{L}_{T}}
\newcommand{\abs}[1]{\left| #1 \right|}
\newcommand{\Astar}{A_\ast}

\DeclareMathOperator{\osc}{osc}
\def\var{\operatorname{var}}
\let\eps=\varepsilon

\title[Self-consistent transfer operators for interval maps with singularities]{On the dynamics of self-consistent transfer operators for interval maps with singularities}
\author{Shadiyar Y. Altynbekov$^1$, Umrbek Olimov$^2$, Marks Ruziboev$^{3,4}$}

\address{$^1$South Kazakhstan State University, Tauke Khan Ave 5, Shymkent, Kazakhstan}
\email{altynbekov\_shadiar@mail.ru}
\address{$^{2}$Tashkent University Information Technologies, Amir Temur Avenue 108, Tashkent, Uzbekistan}
\email{umrbek.olimov.92@mail.ru}
\address{$^3$V.I. Romanovskiy Institute of Mathematics, Uzbekistan Academy of Sciences, 9, Universitet street, 100174, Tashkent, Uzbekistan}
\address{$^{4}$National Pedagogical University of Uzbekistan, 27 Bunyodkor street, Tashkent, Uzbekistan}

\email{m.ruziboev@npuuz.uz}
\date{\today}

\begin{document}
\begin{abstract}
    We study the dynamics of self-consistent transfer operators associated with
    mean-field coupled systems whose unit dynamics is given by an interval map with
    finitely many singularities of cusp type. We show that, when the coupling
    strength is sufficiently weak, such an operator has a unique fixed point in a
    suitable cone of the associated Banach space, and that this fixed point attracts
    the orbit of every density in the cone at an exponential rate in the
    corresponding norm.
\end{abstract}
\maketitle

\section{Introduction}Mean-field coupled dynamical systems have been extensively studied in the physics literature as models of collective behaviour in large ensembles of interacting units. In his seminal work Kaneko \cite{Kaneko1990, Kaneko} studied mean-field coupled systems and observed that, even for small coupling strength, the collective dynamics can differ from that of the uncoupled system, a phenomenon he described as a violation of the law of large numbers. Later Chawanya and Morita \cite{ChawanyaMorita1993} studied the mean-field coupling of logistic maps, demonstrating the emergence of synchronised states and non-trivial collective dynamics. The thermodynamic limit of such systems, in which the number of units tends to infinity, leads to a nonlinear evolution equation for the density of states, which may be regarded as a discrete-time analogue of a Vlasov-type equation. In the deterministic setting studied here, this evolution is governed by the self-consistent transfer operator $\mathcal{L}_\varepsilon(h)=\mathcal{L}_{\varepsilon,h}h$, whose fixed points correspond to stationary states of the mean-field dynamics \cite{Keller2006,BardetKellerZweimuller2009}.

The rigorous study of mean-field coupled maps began with the work of Keller \cite{Keller2006}, who proved the existence of invariant densities for the self-consistent transfer operator under suitable conditions. In subsequent work, Bardet, Keller and Zweim\"uller \cite{BardetKellerZweimuller2009} established stochastic stability and identified the thermodynamic limit for globally coupled maps with bistable thermodynamic behaviour. Coupled systems or self-consistent systems remain an active field of research, we refer to  \cite{BFPh, BK, BPh, BL1, BLS, BKST18, CasGalTan1, CasGalTan2, Gal, T22}  and references therein for recent works in this direction.  More recently, Bahsoun and Korepanov \cite{BK} studied statistical properties of mean-field coupled systems of intermittent maps, while Bahsoun and Liverani \cite{BL1} developed a bifurcation theory for mean-field coupled chaotic maps. They apply their theory to globally coupled expanding maps and to globally coupled Axiom A diffeomorphisms, and show that the system undergoes phase transitions as the coupling strength varies.

The present work extends the self-consistent framework of Keller \cite{Keller2006} and of B\'alint, Keller, S\'elley and T\'oth \cite{BKST18}, developed there for unit maps that are uniformly expanding with bounded derivative, to unit maps possessing finitely many singularities of cusp type, where the derivative is unbounded. On  the other hand, it extends the analysis of maps with a single cusp carried out in \cite{BG} and \cite{6boyev} for a fixed map to families with several cusps and to the coupled, self-consistent setting; to our knowledge this case has not been treated before. Our main contribution is a proof of exponential convergence to equilibrium for the self-consistent dynamics in the $W^{1,1}$-norm, under the assumption that the singularities are mild (exponents $\beta_j\in(-1,-3/4)$). The proof combines properties of the spaces $\bv$ of Keller \cite{Ke85} with a Lasota--Yorke inequality in the Sobolev space $W^{1,1}$.

The rest of the paper is organized as follows. In the next section, we state the problem and formulate the main result. Also, we give an example of family satisfying our assumptions. In Section \ref{sec:proofs} we prove the main result. 

\section{Setup}\label{sec:setup}

We consider a family of interval maps $T_{\varepsilon,h}$ whose dynamics depends
self-consistently on the current probability density $h$.

Fix $k\ge1$ and points
\[
0=c_{0}<c_{1}<\cdots<c_{k}<c_{k+1}=1 ,
\]
and set $I_{i}:=(c_{i},c_{i+1})$ for $i=0,\dots,k$, so that
$[0,1]\setminus\{c_{1},\dots,c_{k}\}=\bigcup_{i=0}^{k}I_{i}$. The
points $c_{1},\dots,c_{k}$ are the \emph{singularities}; $c_{0}=0$ and
$c_{k+1}=1$ are ordinary boundary points. It is convenient to fix, once and for
all, the scale
\begin{equation}\label{eq:r0}
r_{0}:=\tfrac13\min_{0\le j\le k}(c_{j+1}-c_{j}),\qquad
U_{j}:=(c_{j}-r_{0},\,c_{j}+r_{0})\quad(1\le j\le k),\qquad
V:=[0,1]\setminus\bigcup_{j=1}^{k}U_{j},
\end{equation}
so that $U_{1},\dots,U_{k}$ are pairwise disjoint, $0,1\in V$, and
$\abs{x-c_{j}}\ge r_{0}$ for every $x\in V$ and every $j$.

Let $m$ denote Lebesgue measure on $[0,1]$. Define 
$$\mathcal B_{1}:=\{h\in L^{1}[0,1]:h\ge0,\ \int h\,dm=1\}$$ 
and fix $\varepsilon_{0}>0$. For
$\eps\in[0,\varepsilon_{0})$ and $h\in\mathcal B_{1}$ let $T_{\eps,h}:[0,1]\to[0,1]$ be
non-singular with respect to $m$. For a given density $h\in\mathcal B_{1}$ the
evolution of the system is determined by the map $T_{\varepsilon,h}$, and the
corresponding density evolves according to the nonlinear transfer operator
\[
\mathcal{L}_{\varepsilon}(h) =\mathcal{L}_{T_{\varepsilon,h}}h,
\]
where $\mathcal{L}_{T_{\varepsilon,h}}$ is the transfer operator associated with
$T_{\varepsilon,h}$, i.e.\ for all $f\in L^{1}$ and $g\in L^{\infty}$,
\[
    \int_{0}^{1} \mathcal{L}_{T_{\varepsilon,h}}f\cdot g\,dm
    =\int_{0}^{1} f\cdot g\circ T_{\varepsilon,h}\,dm.
\]
Throughout the paper we fix $p=2$; the exponent window
$\beta_{j}\in\bigl(-1,-\tfrac34\bigr)$ appearing below is precisely the condition
$\beta_{j}<\tfrac{1-2p}{2p}$ for this value of $p$.

We assume that the family $T_{\eps,h}$ satisfies the conditions (A1)--(A8) below,
with exponents $\beta_{1},\dots,\beta_{k}$. In each of (A4) and (A6)--(A8) the
constants are quantified \emph{before} $\eps$ and $h$; this is what is meant by
saying that they are uniform in $\eps\in[0,\varepsilon_{0})$ and $h\in\mathcal B_{1}$.

\begin{itemize}
\item[(A1)] For every $i\in\{0,\dots,k\}$,
$T_{i,\eps,h}:=T_{\eps,h}|_{I_{i}}$ is one-to-one.

\item[(A2)] $T_{\eps,h}(0)=T_{\eps,h}(1)=0$ and for every
$j=1,\dots,k$ the one-sided limits
$\lim_{x\to c_{j}^{\pm}}T_{\eps,h}(x)=a_{j,\eps,h}\in[0,1]$ exist and
agree, i.e.\ $T_{\eps,h}$ extends continuously across each singular
point.

\item[(A3)]
$T_{\eps,h}|_{[0,1]\setminus\{c_{1},\dots,c_{k}\}}\in C^{3}$.

\item[(A4)] \label{teta}
There exists $\theta>1$ such that for every $\eps\in[0,\varepsilon_{0})$ and every
$h\in\mathcal B_{1}$,
\[
\inf_{x\notin\{c_{1},\dots,c_{k}\}}\abs{T_{\eps,h}'(x)}\ \ge\ \theta .
\]

\item[(A5)] $T_{0}$ is independent of $h$ and is topologically
mixing on $E_{0}:=\bigcup_{i=0}^{k}T_{i,0}(I_{i})$.

\item[(A6)] There exist exponents
$\beta_{j}\in\bigl(-1,-\tfrac34\bigr)$, $j=1,\dots,k$, such that for every
$\eps\in[0,\varepsilon_{0})$, every $h\in\mathcal B_{1}$ and every $j$,
\[
\lim_{x\to c_{j}^{\pm}}T_{\eps,h}'(x)=\pm\infty
\qquad\text{and}\qquad
\lim_{x\to c_{j}^{\pm}}
\frac{\abs{T_{\eps,h}'(x)}}{\abs{x-c_{j}}^{\beta_{j}}}=C^{(j)}_{\eps,h,1}>0.
\]

\item[(A7)] For every $\eps\in[0,\varepsilon_{0})$, every $h\in\mathcal B_{1}$ and every
$j=1,\dots,k$,
\[
\lim_{x\to c_{j}^{\pm}}
\frac{\abs{T_{\eps,h}''(x)}}{\abs{x-c_{j}}^{\beta_{j}-1}}=C^{(j)}_{\eps,h,2}>0,
\qquad
\lim_{x\to c_{j}^{\pm}}
\frac{\abs{T_{\eps,h}'''(x)}}{\abs{x-c_{j}}^{\beta_{j}-2}}=C^{(j)}_{\eps,h,3}>0.
\]

\item[(A8)] \label{unif}
There exist constants $0<C_{-}\le C_{+}<\infty$ such that for
every $\eps\in[0,\varepsilon_{0})$ and every $h\in\mathcal B_{1}$:
\begin{itemize}
\item[(i)] for every $j\in\{1,\dots,k\}$, every $i\in\{0,1,2\}$ and every
$x\in U_{j}\setminus\{c_{j}\}$,
\[
\bigl|T^{(i+1)}_{\eps,h}(x)\bigr|\le C_{+}\abs{x-c_{j}}^{\beta_{j}-i}
\qquad\text{and}\qquad
\abs{T_{\eps,h}'(x)}\ge C_{-}\abs{x-c_{j}}^{\beta_{j}} ;
\]
\item[(ii)] for every $i\in\{0,1,2\}$ and every $x\in V$,
$\ \bigl|T^{(i+1)}_{\eps,h}(x)\bigr|\le C_{+}$.
\end{itemize}
\item[(A9)]\label{a:A9p}
There exists $\hat C>0$ such that for every $\varepsilon\in[0,\varepsilon_{0})$,
every admissible density $h$ and every $g\in\W{1}$,
\[
\norm{(\Lop{h}-\Lz)\,g}_{2}
\le \hat C\,\varepsilon\,\bigl(1+\norm{h}_{2}\bigr)\norm{g}_{\W{1}} .
\]
\item[(A10)]\label{a:A10}
There exists $\hat C>0$ such that for every $\varepsilon\in[0,\varepsilon_{0})$,
all admissible densities $h_{1},h_{2}\in\W{1}$ and every $g\in\W{2}$,
\[
\norm{(\Lop{h_{1}}-\Lop{h_{2}})\,g}_{\W{1}}
\le \hat C\,\varepsilon\,\norm{h_{1}-h_{2}}_{\W{1}}\,\norm{g}_{\W{2}} .
\]
\end{itemize}
Notice that this family is a direct generalization of the maps studied in \cite{BG};
see also \cite{6boyev} for a similar family. The exact values of the limiting constants
$C^{(j)}_{\eps,h,i}$ in (A6)--(A7) are not used, we only use the uniform bounds of (A8). Further, from the statement it is clear that  $\hat C$ does not depend on $\eps$, on the densities or on $g$. An example of families satisfying (A9)--(A10) is given in
Subsection~\ref{ss:examples}.

\medskip 
The following is the main result of the current work. 

\begin{theorem}\label{thm:main}
There exist constants $\varepsilon_{*}>0$, $K_{1},K'\ge1$, $C>0$, $\rho\in(0,1)$, and
\[
\mathcal D:=\Bigl\{h\ge0:\ \textstyle\int h\,dm=1,\
\norm h_{\W1}\le K_{1}\Bigr\}\subset \W1,
\]
such that for every $\varepsilon\in[0,\varepsilon_{*})$ the coupled system admits a
density $h_{\varepsilon}\in\W{1}$ with $\norm{h_{\varepsilon}}_{\W{1}}\le K_{1}$
and $\norm{h_{\varepsilon}}_{1,1/2}\le K'$ satisfying the following properties.
\begin{itemize}
\item[(i)] $\Le(h_{\varepsilon})=h_{\varepsilon}$, which is the unique fixed point of $\Le$ in $\mathcal D$;
\item[(ii)] for every $h\in\mathcal D$,
\[
\norm{\Le^{n}(h)-h_{\varepsilon}}_{\W{1}}\le C\rho^{\,n},
\qquad n\in\mathbb N .
\]
\end{itemize}
\end{theorem}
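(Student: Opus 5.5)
We follow the scheme of Keller and of B\'alint--Keller--S\'elley--T\'oth: the self-consistent operator is treated as a small perturbation of the linear operator $\Lz$, combining uniform Lasota--Yorke inequalities with the Lipschitz-type bounds (A9)--(A10).

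\emph{Step 1: uniform Lasota--Yorke inequalities.} For every $\eps<\varepsilon_0$ and $h\in\mathcal B_1$ we would establish, with constants depending only on $\theta,C_\pm,\beta_j,r_0$ from (A4), (A8),
\[
\norm{\Lop{h}^n g}_{1,1/2}\le A\lambda^n\norm{g}_{1,1/2}+B\norm{g}_1,
\qquad
\norm{\Lop{h}^n g}_{\W1}\le A\lambda^n\norm{g}_{\W1}+B\norm{g}_{1,1/2},
\]
and the analogous $\W2$ bound relative to $\W1$, with $\lambda<1$. For a single branch one writes $\Lop{h}g=\sum_i (g/|T'|)\circ T_i^{-1}$ and differentiates. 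The factor $1/|T'|$ vanishes at $c_j$ like $|x-c_j|^{-\beta_j}$. The distortion term $T''/T'^2$ behaves like $|x-c_j|^{-\beta_j-1}$, which is integrable near $c_j$ after the change of variables. The $\bvt$ norm absorbs the boundary terms created at $a_{j,\eps,h}$, and this is exactly where $\beta_j<\frac{1-2p}{2p}=-\frac34$ is used: it ensures $(x-c_j)^{-\beta_j-1}\in L^{2}$ so that the H\"older estimate in $\bv$ closes. For the $\W2$ bound, (A7)--(A8) control $T'''$ in the same way.

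\emph{Step 2: invariant set and existence of the fixed point.} By iterating Step 1 uniformly in the sequence of maps, we would fix $K'$ and $K_1$ so that the set $\mathcal D'=\{h\in\mathcal D:\norm h_{1,1/2}\le K',\ \norm{h}_{\W2}\le K_2\}$ is forward invariant under $\Le$ for a suitable iterate. This is possible because the constants do not depend on $h$. The set $\mathcal D'$ is convex and compact in $\W1$ by Rellich, since it is bounded in $\W2$. By (A10), $\Le$ is continuous on it. Schauder then gives a fixed point $h_\eps$ with $\norm{h_\eps}_{\W1}\le K_1$ and $\norm{h_\eps}_{1,1/2}\le K'$.

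\emph{Step 3: spectral gap of $\Lz$ and perturbation.} By (A5), topological mixing together with the Lasota--Yorke inequality and quasi-compactness (Hennion) yields a simple eigenvalue $1$ for $\Lz$ on $\W1$ and no other peripheral spectrum. Hence $\norm{\Lz^N g}_{\W1}\le \frac14\norm g_{\W1}$ for all $g\in\W1$ with $\int g=0$. By Keller--Liverani, applied with (A9) and the uniform bounds, the same holds uniformly for products of $N$ operators $\Lop{h_i}$ once $\eps$ is small. For $h\in\mathcal D$ we write
\[
\Le(h)-\Le(h_\eps)=\Lop{h}(h-h_\eps)+(\Lop{h}-\Lop{h_\eps})h_\eps .
\]
By (A10) the second term is at most $\hat C\eps\norm{h-h_\eps}_{\W1}\norm{h_\eps}_{\W2}$. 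Iterating $N$ times gives
\[
\norm{\Le^N h-h_\eps}_{\W1}\le\bigl(\tfrac14+C_N\eps\bigr)\norm{h-h_\eps}_{\W1}\le\tfrac12\norm{h-h_\eps}_{\W1}
\]
for $\eps<\varepsilon_*$. This yields (ii) with $\rho=2^{-1/N}$, together with uniqueness in $\mathcal D$. For $h$ in $\mathcal D$ that is not in $\W2$, we would first use the regularization from Step 1 for finitely many steps, or apply the estimate along the orbit, since only the norm of $h_\eps$ appears in $\W2$.

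\emph{Main obstacle.} The hardest part is Step 1 near the cusps, where $T'$ is unbounded and the images $a_{j,\eps,h}$ create jumps. The uniform-in-$h$ constants have to come only from (A8), and the window $\beta_j\in(-1,-3/4)$ is needed exactly there. We would first try to prove the one-branch estimate on $U_j$ by the substitution $y=T(x)$, explicitly computing the integrals of $|x-c_j|^{-\beta_j-1}$.
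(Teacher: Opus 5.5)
Your Step 2 contains a false claim that Step 3 relies on. Because $\mathcal D'$ is bounded in $\W{2}$, it is only \emph{relatively} compact in $\W{1}$, not compact. It is not closed there: a $\W{1}$-limit of functions with $\norm{\cdot}_{\W{2}}\le K_2$ need only have a derivative of bounded variation (for instance, smooth out the corner of $1+\delta(\abs{x-\frac12}-\frac14)$ for small $\delta$). So Schauder has to be applied on the $\W1$-closure $\overline{\mathcal D'}$, and the resulting fixed point $h_\eps$ is not known to lie in $\W{2}$.

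Yet Step 3 feeds $\norm{h_\eps}_{\W{2}}$ into (A10), and continuity of $\Le$ on $\overline{\mathcal D'}$ also needs (A10) for some $g\notin\W2$. This is exactly the regularity that Remark~\ref{rem:DvsB} says is unknown, and the paper's proof is built to avoid needing it.

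The gap can be repaired. Each $\Lop{h}$ is bounded on $\W1$ by \eqref{eq:s1}, so (A10) passes to $\W1$-limits: $\norm{(\Lop{h_1}-\Lop{h_2})g}_{\W1}\le\hat C\eps K_2\norm{h_1-h_2}_{\W1}$ for every $g$ in the $\W1$-closure of $\{\norm{g}_{\W2}\le K_2\}$. This gives Lipschitz continuity of $\Le$ on $\overline{\mathcal D'}$, invariance of $\overline{\mathcal D'}$, and Step 3 with $K_2$ in place of $\norm{h_\eps}_{\W2}$.

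One can even show $h_\eps\in\W2$. It is fixed by the linear operator $\Lop{h_\eps}$, which is quasi-compact on $\W2$ by \eqref{eq:LY2} and the compact embedding $\W2\hookrightarrow\W1$. Moreover $1$ is an eigenvalue there: it lies in the spectrum because the adjoint fixes $\int\cdot\,dm$, and it exceeds the essential spectral radius $\lambda^{2}$. Memory loss on $\W1$ then forces the eigenvector, normalised to integral one, to equal $h_\eps$.

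Also make the invariance one-step, which the one-step constants $\lambda=\theta^{-1}$, $\vartheta=\theta^{-1/2}$ permit. Invariance under $\Le^{N}$ alone would give a fixed point of $\Le^{N}$, not of $\Le$.

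With this repair your route is genuinely different from the paper's. The paper uses no compactness at all:
\begin{itemize}
\item the asymmetric estimate of Lemma~\ref{lem:stab} places the $\W2$-weight required by (A10) on the orbit of a point of $\bar B$, controlled by Lemma~\ref{lem:comp2};
\item the recursion is closed by the discrete Gr\"onwall Lemma~\ref{lem:gronwall};
\item $h_\eps$ is obtained as the $\W1$-limit of the Cauchy sequence $\Le^n(\mathbf 1)$;
\item (ii) follows by comparing each $h\in\mathcal D$ with the orbit of $\mathbf 1$.
\end{itemize}
You instead put the $\W2$-weight on the fixed point itself. Then one block estimate $\norm{\Le^N h-h_\eps}_{\W1}\le\frac12\norm{h-h_\eps}_{\W1}$ (memory loss plus a crude Lipschitz bound on the intermediate errors) handles every $h\in\mathcal D$ directly, with no regularity of $h$ and no Gr\"onwall step. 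The price is Schauder and the regularity issue above.

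Two smaller corrections:
\begin{itemize}
\item Keller--Liverani is a statement about a single perturbed operator. For products of different $\Lop{h_i}$ you need the sequential telescoping of Lemma~\ref{lem:memloss}, where the smallness that (A9) gives only in $L^2$ is transferred back to $\W1$ through the Lasota--Yorke inequality.
\item There are no jumps at $a_{j,\eps,h}$ to absorb, since each branch term $(f\circ g_i)\abs{g_i'}$ tends to $0$ there. Also, $\abs{T''}/\abs{T'}^{2}\in L^2$ needs only $\beta_j<-\frac12$, not $\beta_j<-\frac34$.
\end{itemize}
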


\begin{remark}\label{rem:DvsB}
 We don't claim $\Le$  to be a contraction on $\mathcal D$, which is appearing as the \emph{basin} in (i) and (ii).
The
contraction estimate i.e., Lemma~\ref{lem:stab}, is asymmetric: its first argument is required to lie in the smaller set
$\bar B\subset \W2$ defined in equation \eqref{eq:barB}, while
its second argument must only lie in $\mathcal D$. 
Therefore, in the above theorem, by working on a smaller set $\bar B$ we first estimate $\varepsilon_*$. Once we estimate $\varepsilon_*$ and fix it, we obtain the convergence result in Theorem \ref{thm:main}.
Further, we  have
$\bar B\subset\mathcal D$, and we do not know whether $h_{\varepsilon}$ belongs
to $\W2$.
\end{remark}

\subsection{A class of examples}\label{ss:examples}

Assumptions (A9)--(A10) are perturbative statements about the dependence of
$\Lop h$ on the coupling, and it is natural to ask whether they hold for concrete self-consistent families. We describe such a class below. 

Let $u_{0}>0$ and let $S:[0,1]\times(-u_{0},u_{0})\to[0,1]$ be a family of maps
such that
\begin{itemize}
\item[(E1)] $S(\cdot,0)=T_{0}$, and for every $u\in(-u_{0},u_{0})$ the map
$S(\cdot,u)$ satisfies (A1)--(A4) and (A6)--(A8) with the \emph{same} singular
points $c_{j}$, the \emph{same} exponents $\beta_{j}$, and with constants
$\theta,C_{\pm}$ independent of $u$; $T_0$ satisfies (A5).
\item[(E2)] the assignment $u\mapsto\mathcal L_{S(\cdot,u)}$ is Lipschitz from
$(-u_{0},u_{0})$ into the bounded operators $\W1\to L^{2}$, with constant $L_{1}$,
and into the bounded operators $\W2\to\W1$, with constant $L_{2}$.
\end{itemize}
Fix a kernel $\kappa\in C^{3}([0,1])$ with $\norm\kappa_{\infty}\le1$ and couple
the units through the scalar mean field
\[
\Phi(h):=\int_{0}^{1}\kappa(y)h(y)\,dm(y),\qquad h\in\mathcal B_{1},
\]
so that $\abs{\Phi(h)}\le1$ for every $h\in\mathcal B_{1}$. For
$\eps\in[0,\varepsilon_{0})$ with $\varepsilon_{0}<u_{0}$ put
\begin{equation}\label{eq:example}
T_{\eps,h}:=S\bigl(\cdot\,,\ \eps\,\Phi(h)\bigr).
\end{equation}

\begin{proposition}\label{prop:examples}
Assume (E1)--(E2) and that $T_{0}$ is topologically mixing on $E_{0}$. Then the
family \eqref{eq:example} satisfies (A1)--(A10),  where $\hat C$ in (A9)--(A10) will be $\max\{L_{1},L_{2}\}$.
\end{proposition}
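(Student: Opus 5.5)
The plan is to check the assumptions in two groups. Assumptions (A1)--(A8) are inherited pointwise from (E1). Assumptions (A9)--(A10) follow from the Lipschitz property (E2) combined with the Lipschitz dependence of the mean field $\Phi$ on $h$.

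For the first group, fix $\eps\in[0,\varepsilon_0)$ and $h\in\mathcal B_1$, and put $u:=\eps\Phi(h)$. Since $\abs{\Phi(h)}\le1$, we have $\abs u\le\eps<\varepsilon_0<u_0$, so $u\in(-u_0,u_0)$. Then $T_{\eps,h}=S(\cdot,u)$ is one of the maps covered by (E1). It therefore satisfies (A1)--(A4) and (A6)--(A8) with the fixed points $c_j$, the fixed exponents $\beta_j\in(-1,-\tfrac34)$, and constants $\theta,C_\pm$ that do not depend on $u$. Because these constants are independent of $u$, they are also independent of $(\eps,h)$, which is exactly the uniformity demanded in (A4) and (A6)--(A8). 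The limits in (A6)--(A7) are required only for each individual map, so (E1) gives them directly. For (A5), at $\eps=0$ we get $u=0$ for every $h$, hence $T_{0,h}=S(\cdot,0)=T_0$. This map is independent of $h$ and is topologically mixing on $E_0$ by hypothesis.

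For (A9), again write $u=\eps\Phi(h)$. By (E2), $\Lop h-\Lz=\mathcal L_{S(\cdot,u)}-\mathcal L_{S(\cdot,0)}$ has operator norm from $\W1$ to $L^2$ at most $L_1\abs u\le L_1\eps$. Hence
\[
\norm{(\Lop h-\Lz)g}_2\le L_1\eps\norm g_{\W1}\le L_1\eps(1+\norm h_2)\norm g_{\W1}.
\]
For (A10), take $h_1,h_2$ and set $u_i=\eps\Phi(h_i)$. By (E2),
\[
\norm{(\Lop{h_1}-\Lop{h_2})g}_{\W1}\le L_2\abs{u_1-u_2}\norm g_{\W2}.
\]
Since $\norm\kappa_\infty\le1$,
\[
\abs{u_1-u_2}=\eps\Bigl|\int\kappa(h_1-h_2)\,dm\Bigr|\le\eps\norm{h_1-h_2}_{L^1}\le\eps\norm{h_1-h_2}_{\W1},
\]
where the last step uses that the $\W1$ norm dominates the $L^1$ norm. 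This gives (A10). Both estimates hold with $\hat C=\max\{L_1,L_2\}$.

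I do not expect a real obstacle. The step that needs the most care is confirming that $u=\eps\Phi(h)$ always lies in $(-u_0,u_0)$, so that (E1)--(E2) actually apply. This holds because $\varepsilon_0<u_0$ and $\abs{\Phi}\le1$. One should also say explicitly that "admissible densities" in (A9)--(A10) means elements of $\mathcal B_1$ (intersected with $\W1$ where needed), so that $\Phi(h)$ is defined and bounded by $1$. Nonnegativity of the densities is not used in the Lipschitz bound: the estimate $\abs{\Phi(h_1)-\Phi(h_2)}\le\norm{h_1-h_2}_{L^1}$ holds for arbitrary integrable $h_1,h_2$.
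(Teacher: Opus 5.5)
Your proposal is correct and follows the paper's proof essentially step for step. (A1)--(A8) are inherited from (E1) by viewing $T_{\eps,h}$ as $S(\cdot,u)$ with $u=\eps\Phi(h)$, and (A9)--(A10) follow from (E2) together with $\abs{\Phi(h_1)-\Phi(h_2)}\le\norm{h_1-h_2}_{1}\le\norm{h_1-h_2}_{\W1}$. Your explicit check that $\abs{u}\le\eps<\varepsilon_0<u_0$ is a small point that the paper leaves implicit.
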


\begin{proof}
For each fixed $\eps$ and $h$ the map $T_{\eps,h}$ is the member $S(\cdot,u)$ of
the family with parameter $u=\eps\Phi(h)\in(-u_{0},u_{0})$, so (A1)--(A3) and
(A6)--(A8) hold with constants independent of $(\eps,h)$ by (E1). Moreover
$T_{0,h}=S(\cdot,0)=T_{0}$ is independent of $h$, so (A5) holds. Assumption (E1) implies (A4), since the constant $\theta$ is independent
of $u$.

We show (A9) also holds. Let $g\in\W1$. Since $\abs{\eps\Phi(h)}\le\eps$, the first Lipschitz
property in (E2) gives
\[
\norm{(\Lop h-\Lz)g}_{2}
=\norm{\bigl(\mathcal L_{S(\cdot,\eps\Phi(h))}-\mathcal L_{S(\cdot,0)}\bigr)g}_{2}
\le L_{1}\eps\norm g_{\W1}
\le L_{1}\eps\bigl(1+\norm h_{2}\bigr)\norm g_{\W1}.
\]
Finally, we show (A10). Let $h_{1},h_{2}\in\mathcal B_{1}\cap\W1$ and $g\in\W2$. Since $\Phi$
is linear and $\norm\kappa_{\infty}\le1$,
\[
\abs{\eps\Phi(h_{1})-\eps\Phi(h_{2})}
=\eps\abs{\Phi(h_{1}-h_{2})}
\le\eps\norm{h_{1}-h_{2}}_{1}\le\eps\norm{h_{1}-h_{2}}_{\W1},
\]
so the second Lipschitz property in (E2) yields
\[
\norm{(\Lop{h_{1}}-\Lop{h_{2}})g}_{\W1}
\le L_{2}\,\abs{\eps\Phi(h_{1})-\eps\Phi(h_{2})} \norm g_{\W2} \le L_{2}\,\eps\,\norm{h_{1}-h_{2}}_{\W1}\norm g_{\W2}. \qedhere
\]
\end{proof}

\begin{remark}\label{rem:examples}
 One can also give a concrete example of maps satisfying the above conditions. Consider 
\[
S(x,u):=T_{0}(x)+u\,\psi(x),
\]
where $T_0$ is the explicit example given in \cite[Section 4]{BG}, $\psi\in C^{3}([0,1])$ is supported in a compact subset $K$ of the interior
of $V$ with $T_{0}(K)\subset(0,1)$. Then $S(\cdot,u)$ coincides with $T_{0}$ on a
neighbourhood of every singularity and of $\{0,1\}$, so (A2) and the local
conditions (A6)--(A8)(i) are unaffected by $u$. Further, there exists $u_0>0$ such that $\abs{S'(\cdot,u)}\ge\theta- u_0\norm{\psi'}_{\infty}>1$ for all $\abs u\le u_0$ and
$S(\cdot,u)$ maps $[0,1]$ into itself, giving (A1), (A4) and (A8)(ii). In this
case (E2) reduces to a standard smooth-perturbation estimate for the transfer
operator, since the perturbation is supported away from the singular set.
\end{remark}

\section{Proofs}\label{sec:proofs}
The following proposition is the main technical result, which states the key estimates we need. Its proof is somewhat standard and is therefore postponed to the appendix.

\begin{proposition}\label{prop:LY}
Assume that the family $T_{\eps,h}:[0,1]\to[0,1]$ satisfies
{\rm(A1)--(A8)} for all $\eps\in[0,\varepsilon_{0})$ and
$h\in\mathcal B_{1}$. Then there are $\lambda\in(0,1)$ and $M_{0}\ge0$,
depending only on $k$, $\theta$, the points $c_{j}$, the exponents
$\beta_{j}$ and the constants in {\rm(A8)} 
%--- in particular uniform in $\eps$ and $h$ --- 
such that for every $\eps\in[0,\varepsilon_{0})$
and $h\in\mathcal B_{1}$:
\begin{align}
&\norm{(\Lop hf)'}_{1}\le\lambda\norm{f'}_{1}+M_{0}\norm f_{2}
&&\text{for all }f\in\W1,
\label{eq:LY1}\\
&\norm{\Lop hf}_{\W2}\le\lambda^{2}\norm f_{\W2}+M_{0}\norm f_{\W1}
&&\text{for all }f\in\W2,
\label{eq:LY2}\\
&\norm{\Lop h}_{L^{2}\to L^{2}}\le
A = \Bigl(\tfrac{k+1}{\theta}\Bigr)^{1/2}.
\label{eq:L2}
\end{align}
Notice that one may take $\lambda=\theta^{-1}$. Also, from the statement it is clear that the constants appearing in \eqref{eq:LY1}-\eqref{eq:L2} are independent of $\eps$ and $h$, but they may depend on $\varepsilon_{0}>0$.  
\end{proposition}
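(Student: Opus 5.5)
The proof is a direct computation with the explicit formula for the transfer operator. Write $T=T_{\eps,h}$ and let $\phi_i:=T_i^{-1}:T(I_i)\to I_i$ denote the inverse branches. Then
\[
\Lop h f=\sum_{i=0}^k (f\,g_i)\circ\phi_i\,\mathbf 1_{T(I_i)},\qquad g_i:=1/\abs{T_i'} .
\]
By (A4), $\abs{g_i}\le\theta^{-1}$. By (A2) every branch image is an interval with endpoints among $0$ and the values $a_{j}$, so the indicators introduce only finitely many jump points.

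The $L^2$ bound \eqref{eq:L2} comes first. By Cauchy--Schwarz over the $k+1$ branches,
\[
\abs{\Lop hf}^2\le (k+1)\sum_i (f^2g_i^2)\circ\phi_i .
\]
Integrating and changing variables gives
\[
\int (f^2g_i^2)\circ\phi_i=\int_{I_i}f^2g_i\le\theta^{-1}\int_{I_i}f^2 ,
\]
so $\norm{\Lop hf}_2^2\le \frac{k+1}{\theta}\norm f_2^2$.

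Next, \eqref{eq:LY1}. Differentiating on each branch,
\[
\bigl((fg_i)\circ\phi_i\bigr)'=\bigl(f'g_i^2\bigr)\circ\phi_i+\bigl(f\,g_i'g_i\bigr)\circ\phi_i ,
\]
where $g_i'g_i=-T''\operatorname{sgn}(T')/T'^3$. After changing variables, the $L^1$ norm of the first term is $\int_{I_i}\abs{f'}g_i\le\theta^{-1}\norm{f'}_{L^1(I_i)}$, which gives $\lambda=\theta^{-1}$. The second term contributes $\int_{I_i}\abs f\,\abs{T''}/T'^2$.
\begin{itemize}
\item On $V$ this is at most $C_+\theta^{-2}\norm f_1$.
\item On $U_j$, (A8)(i) gives $\abs{T''}/T'^2\le C_+C_-^{-2}\abs{x-c_j}^{-\beta_j-1}$. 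Since $\beta_j>-1$, this weight is bounded, so this part is also $\le C\norm f_1\le C\norm f_2$.
\end{itemize}
The distributional derivative also contains point masses at the endpoints of each $T(I_i)$. Their size is controlled by the one-sided limits of $(fg_i)\circ\phi_i$, which are bounded by $\norm{fg_i}_\infty$. Near a cusp $g_i\to0$, and away from it $g_i\le\theta^{-1}$. Each jump is therefore bounded by $\norm{fg_i}_{L^\infty(I_i)}\le\theta^{-1}\norm{f}_{L^\infty(I_i)}$. On an interval of length $\ge3r_0$ this is in turn $\le\theta^{-1}\bigl(\norm{f'}_{L^1(I_i)}+(3r_0)^{-1}\norm f_{L^1(I_i)}\bigr)$.

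There is a catch. These jump terms add a multiple of $\norm{f'}_1$ that would spoil the contraction factor, unless I use that at the boundary points the images of adjacent branches meet and the jumps partially cancel (the values at $c_j^\pm$ are both $f(c_j)g\to0$ because $g\to0$ at cusps, and at $0,1$ both map to $0$). So the plan is to group the jumps pairwise through (A2), so that only jumps at cusp values, which vanish, and at $T(0)=T(1)=0$ remain. If that cancellation is incomplete, I would instead pass to an iterate $T^n$ with $\theta^n$ large, where the jump coefficient $\theta^{-n}$ times a fixed combinatorial factor is small. The resulting contraction constant is then renamed $\lambda$.

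The estimate \eqref{eq:LY2} follows by differentiating once more. The leading term is $(f''g_i^3)\circ\phi_i$, whose $L^1$ norm is at most $\theta^{-2}\norm{f''}_1$. The remaining terms have the forms
\[
f'g_i^2g_i',\qquad f\bigl(g_i''g_i+(g_i')^2\bigr)g_i
\]
(composed with $\phi_i$). Their weights, after the change of variables, are $\abs{T''}/\abs{T'}^3$ and combinations like $\abs{T'''}/\abs{T'}^3$ and $T''^2/\abs{T'}^4$. By (A8) these behave like $\abs{x-c_j}^{-2\beta_j-1}$, $\abs{x-c_j}^{-2\beta_j-2}$ and $\abs{x-c_j}^{-2\beta_j-2}$ respectively, which are bounded since $\beta_j<-3/4<-1$. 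In fact the powers are positive. The first is multiplied by $\abs{f'}$, giving $\le C\norm{f'}_1$; the second is multiplied by $\abs f$, giving $\le C\norm f_1$. Both are absorbed into $M_0\norm f_{\W1}$.

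The $L^1$ and $\norm{f'}_1$ parts of $\norm{\cdot}_{\W2}$ are handled by \eqref{eq:LY1} and by $\norm{\Lop hf}_1\le\norm f_1$. Their contribution $\lambda\norm{f'}_1$ sits inside $M_0\norm f_{\W1}$.

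The main obstacle is the boundary-jump terms, both in the first derivative and, for $\W2$, in the first derivative of the image, which must be $L^1$ for the image to lie in $\W2$. I expect to handle them through the continuity in (A2) together with the vanishing of $g$ at the cusps, with passage to an iterate as the fallback.
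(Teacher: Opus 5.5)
Your route is the same as the paper's: branch formula, differentiation on each branch, a change of variables to the weights $\abs{T''}/(T')^2$, $\abs{T''}/\abs{T'}^3$ and $\abs{T'''}/\abs{T'}^3+(T'')^2/(T')^4$, and removal of boundary terms using $1/\abs{T'}\to0$ at the cusps. Your $L^2$ bound by Cauchy--Schwarz over the $k+1$ branches is a valid alternative to the paper's duality argument and gives the same constant. However, your exponent bookkeeping for the lower-order weights is wrong, and two steps fail as written.

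For $\beta_j\in(-1,-\tfrac34)$ you have $\abs{T''}/(T')^2\lesssim\abs{x-c_j}^{-(1+\beta_j)}$ with $1+\beta_j\in(0,\tfrac14)$. So this weight is \emph{unbounded} at each cusp; by (A6)--(A7) it actually tends to $+\infty$. Hence $\int_{I_i}\abs f\,\abs{T''}/(T')^2\le C\norm f_1$ is false, as you see by taking $f$ with $\norm f_1=1$ concentrated near $c_j$. What does hold is that the weight lies in $L^2$, because $2(1+\beta_j)<1$, so Cauchy--Schwarz gives $M_1\norm f_2$. This is exactly why \eqref{eq:LY1} has $\norm f_2$ rather than $\norm f_1$ on the right.

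The same problem occurs in \eqref{eq:LY2}. The chain ``$\beta_j<-\tfrac34<-1$'' is false, and the exponent $-2\beta_j-2$ lies in $(-\tfrac12,0)$. So $\abs{T'''}/\abs{T'}^3+(T'')^2/(T')^4$ is unbounded, and the bound $C\norm f_1$ for the zeroth-order term fails for the same reason. The correct argument is that this weight is integrable, since $2+2\beta_j<1$, and you pair it with $\norm f_\infty\le\norm f_{\W{1}}$. Only $\abs{T''}/\abs{T'}^3\lesssim\abs{x-c_j}^{-(1+2\beta_j)}$ is genuinely bounded. In every case the hypothesis doing the work is $\beta_j<-\tfrac12$.

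For the boundary terms you need neither pairwise cancellation nor an iterate. The iterate fallback would not even give the claimed one-step inequality with $\lambda=\theta^{-1}$ for each single $\Lop{h}$. By (A2), every endpoint of $T(I_i)$ is either a cusp value $a_j$ or the point $0\in\partial[0,1]$.
\begin{itemize}
\item At $a_j$ the single branch term $(f/\abs{T'})\circ\phi_i$ tends to $0$ on its own, since $f$ is bounded and $1/\abs{T'}\to0$.
\item The point $0$ lies on $\partial[0,1]$, so it does not enter the distributional derivative on $(0,1)$.
\end{itemize}
Hence each zero-extended branch term already lies in $\W{1}(0,1)$ with no atoms, and your jump estimate is never needed.

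For $\W{2}$ you must also show that the derivative of each branch term vanishes at the cusp values, and there ``$g\to0$ at the cusps'' is not enough because $g_i'$ blows up. The term $(f\,g_i'g_i)\circ\phi_i$ has modulus $\abs f\,\abs{T''}/\abs{T'}^3$ evaluated at $\phi_i$. Its vanishing therefore needs $\abs{T''}/\abs{T'}^3\to0$, which again uses $\beta_j<-\tfrac12$. The other term, $(f'g_i^2)\circ\phi_i$, vanishes because $f'\in L^\infty$ for $f\in\W{2}$.
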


Throughout we set
\begin{equation}\label{eq:Astar}
A_\ast:=\max\{A,1\}=\max\Bigl\{\bigl(\tfrac{k+1}{\theta}\bigr)^{1/2},
\,1\Bigr\}\ \ge1 ,
\end{equation}
so that, by submultiplicativity, any composition of $n$ transfer
operators of the family has $L^{2}\to L^{2}$ norm at most
$A_\ast^{\,n}$.

Throughout the proof we fix $\eps$ and $h$, and track that every constant produced depends only on
the uniform constant listed in the Proposition \ref{prop:LY}.

\begin{remark}\label{rem:window}
The proof uses only $\beta_{j}<-\tfrac12$ for every $j$, which is
implied by the assumed window $\beta_{j}\in(-1,-\tfrac34)$. (The full strength $\beta_{j}<-\tfrac34$ is needed in the Appendix.)  Define
\[
\beta_{*}:=\max_{1\le j\le k}\beta_{j}\in\Bigl(-1,-\tfrac12\Bigr).
\]
\end{remark}

%We also use the results of the Appendix: the space
%$\bv\subset L^{1}$ of Keller \cite{Ke85}, the uniform Lasota--Yorke
%inequality on $\bv$ (Proposition~\ref{prop:LYBVapp}),
%\begin{equation}
%\norm{\Lop{h}f}_{1,1/p}\le\vartheta\norm{f}_{1,1/p}+C_{\mathrm{ap}}\norm{f}_{1},
%\qquad \vartheta\in(0,1),
%\tag{LY-BV}\label{eq:LYBV}
%\end{equation}
%the embedding $\norm{\cdot}_{\infty}\le
%\hat C_{\infty}\norm{\cdot}_{1,1/p}$, and the fact that closed balls of $\bv$
%are closed in $L^{1}$ (Lemma~\ref{lem:lsc}).

\subsection{Auxiliary estimates}
In this subsection we obtain several auxiliary estimates for the iterates of the
transfer operators. 
\begin{lemma}\label{lem:comp1}
Let $M:=M_{0}+(1-\lambda)$ and $M_{n}:=(A_\ast+1)^{n}M$. For all
$n\in\mathbb N$, all $\eps\in[0,\varepsilon_{0})$, all admissible densities
$h_{1},\dots,h_{n}$ and all $g\in\W1$,
\[
\norm{\Lop{h_{n}}\cdots\Lop{h_{1}}g}_{\W1}
\le\lambda^{n}\norm g_{\W1}+M_{n}\norm g_{2}.
\]
\end{lemma}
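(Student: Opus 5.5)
Prove the bound by induction on $n$, using \eqref{eq:LY1} for the derivative part and \eqref{eq:L2} for the $L^{2}$ (hence $L^{1}$) part. Take $\norm{g}_{\W1}=\norm{g}_1+\norm{g'}_1$, and write $g_{m}:=\Lop{h_{m}}\cdots\Lop{h_{1}}g$ with $g_0=g$.

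\emph{One-step estimate.} Fix $f\in\W1$ and any admissible $h$. By \eqref{eq:LY1},
$\norm{(\Lop hf)'}_1\le\lambda\norm{f'}_1+M_0\norm f_2$. For the $L^1$ part, use $\norm{\Lop hf}_1\le\norm{\Lop h f}_2\le A\norm f_2\le A_\ast\norm f_2$. Alternatively, write $\norm{\Lop hf}_1\le\norm f_1=\lambda\norm f_1+(1-\lambda)\norm f_1\le\lambda\norm f_1+(1-\lambda)\norm f_2$. Adding the two pieces gives
\[
\norm{\Lop hf}_{\W1}\le\lambda\norm f_{\W1}+(M_0+1-\lambda)\norm f_2=\lambda\norm f_{\W1}+M\norm f_2 .
\]
This is exactly the case $n=1$, since $M\le(A_\ast+1)M=M_1$. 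It also explains why $M$ was defined as $M_0+(1-\lambda)$.

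\emph{Induction.} Suppose the bound holds for $n-1$. Apply the one-step estimate to $f=g_{n-1}$:
\[
\norm{g_n}_{\W1}\le\lambda\norm{g_{n-1}}_{\W1}+M\norm{g_{n-1}}_2\le\lambda^{n}\norm g_{\W1}+\bigl(\lambda M_{n-1}+M A_\ast^{\,n-1}\bigr)\norm g_2 .
\]
Here the $L^2$ norm of the composition is controlled by submultiplicativity of \eqref{eq:L2}, which gives $\norm{g_{n-1}}_2\le A_\ast^{n-1}\norm g_2$. It remains to check that $\lambda M_{n-1}+MA_\ast^{n-1}\le M_n$. Since $\lambda<1$ and $A_\ast\le A_\ast+1$, the left side is at most
\[
(A_\ast+1)^{n-1}M+(A_\ast+1)^{n-1}M=2(A_\ast+1)^{n-1}M\le(A_\ast+1)^nM ,
\]
where the last inequality uses $A_\ast+1\ge2$.

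The only delicate point is the bookkeeping of the low-order term: we need $\norm{\cdot}_1\le\norm{\cdot}_2$ on $[0,1]$ and the uniformity of \eqref{eq:LY1} and \eqref{eq:L2} in $h$ and $\eps$. This uniformity is what allows different densities $h_1,\dots,h_n$ at each step. The generous growth factor $(A_\ast+1)^n$ absorbs the crude estimates, so no sharper analysis is needed.
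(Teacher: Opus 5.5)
Your proof is correct and follows the paper's argument essentially verbatim: you use the same one-step bound from \eqref{eq:LY1} together with $\norm{\Lop h f}_1\le\norm f_1\le\norm f_2$ (which is exactly where $M=M_0+(1-\lambda)$ comes from), and the same induction using $\norm{g_{n-1}}_2\le A_\ast^{\,n-1}\norm g_2$; your closing step $2(A_\ast+1)^{n-1}M\le(A_\ast+1)^nM$ is only a cosmetic variant of the paper's $A_\ast^{\,n-1}\le A_\ast(A_\ast+1)^{n-1}$. The first option you float for the $L^1$ part, bounding by $A_\ast\norm f_2$, would give a different constant than $M$ and should be dropped, but you do not actually use it.
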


\begin{proof}
Recall the standard estimates
\begin{equation}\label{eq:embed}
\norm{f}_{1}\le\norm{f}_{2}\le\norm{f}_{\infty}\le
\norm{f}_{\W{1}},\qquad f\in\W{1}([0,1]),
\end{equation}

By \eqref{eq:LY1} and \eqref{eq:embed}, we have 
\begin{equation}\label{eq:s1}
\begin{aligned}
\norm{\Lop{h}f}_{\W{1}}
&\le\lambda\norm{f'}_{1}+M_{0}\norm{f}_{2}+\norm{f}_{1}\\
&=\lambda\norm{f}_{\W{1}}+M_{0}\norm{f}_{2}+(1-\lambda)\norm{f}_{1}
\le\lambda\norm{f}_{\W{1}}+M\norm{f}_{2},
\end{aligned}
\end{equation}
with $M = M_0+(1-\lambda)$.
We now argue by induction on $n$. The case $n=1$ holds since
$M\le(A_\ast+1)M=M_{1}$. Assume the claim for $n-1$. The inequality \eqref{eq:L2} implies that 
$$\norm{\Lop{h_{n-1}}\cdots\Lop{h_{1}}g}_{2}\le
\Astar^{\,n-1}\norm g_{2},$$
Combining the latter with  \eqref{eq:s1}, the inductive step we have 
\[
\norm{\Lop{h_{n}}\cdots\Lop{h_{1}}g}_{\W1}
\le\lambda^{n}\norm g_{\W1}
+\bigl(\lambda M_{n-1}+M\Astar^{\,n-1}\bigr)\norm g_{2}.
\]
Finally, using $\lambda<1$, $\Astar\ge1$ we obtain 
\[
\lambda M_{n-1}+M\Astar^{\,n-1}
\le(\Astar+1)^{n-1}M+\Astar(\Astar+1)^{n-1}M
=(\Astar+1)^{n}M=M_{n}. \qedhere
\]
which finishes the proof.
\end{proof}

\begin{remark}
The constant $M_{n}$ is \emph{exponential} in $n$.  This is harmless in what follows because we fix $n$  always before the parameters that beat  $M_{n}$ are chosen.
\end{remark}
Now we obtain uniform bounds on the iterates of the operator $\Lop{h}$.
\begin{lemma}\label{lem:unif1}
There exists $C_{u}\ge1$ such that for all $n$, $\varepsilon$, admissible
densities $h_{1},\dots,h_{n}\in \W{1}$ and $f\in\W{1}$ the following holds
\[
\norm{\Lop{h_{n}}\cdots\Lop{h_{1}}f}_{\W{1}}\le C_{u}\norm{f}_{\W{1}} .
\]
\end{lemma}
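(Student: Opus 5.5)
The bound of Lemma~\ref{lem:comp1} cannot be used directly, because $M_{n}$ grows exponentially in $n$ and the $L^{2}$ bound \eqref{eq:L2} only gives $A_\ast^{\,n}$ with possibly $A_\ast>1$. The plan is therefore to redo the one-step Lasota--Yorke estimate \eqref{eq:LY1} with the strong norm $\norm{f'}_{1}$ and the weak norm $\norm f_{1}$ in place of $\norm f_{2}$. This is useful because every transfer operator is a weak contraction on $L^{1}$: $\norm{\Lop h f}_{1}\le\norm f_{1}$ for all $f\in L^{1}$, by duality with $\norm{g\circ T_{\eps,h}}_{\infty}\le\norm g_{\infty}$. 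Composing then produces a geometric series with a fixed ratio instead of an exponentially growing constant.

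\emph{Step 1 (interpolation).} For $f\in\W1$ we use $\norm f_{2}^{2}\le\norm f_{\infty}\norm f_{1}$ together with $\norm f_{\infty}\le\norm f_{\W1}$ from \eqref{eq:embed}. Young's inequality then gives, for every $\delta>0$,
\[
\norm f_{2}\le \norm f_{\W1}^{1/2}\norm f_{1}^{1/2}\le \delta\norm f_{\W1}+\tfrac1{4\delta}\norm f_{1}
=\delta\norm{f'}_{1}+\bigl(\delta+\tfrac1{4\delta}\bigr)\norm f_{1}.
\]

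\emph{Step 2 (modified one-step inequality).} Insert this into \eqref{eq:LY1} and fix $\delta>0$ so small that $\lambda_{1}:=\lambda+M_{0}\delta<1$; for instance take $\delta=(1-\lambda)/(2M_{0}+1)$. This yields
\[
\norm{(\Lop h f)'}_{1}\le\lambda_{1}\norm{f'}_{1}+B\norm f_{1},\qquad B:=M_{0}\bigl(\delta+\tfrac1{4\delta}\bigr).
\]
Here $\lambda_{1}$ and $B$ depend only on $\lambda,M_{0}$, hence are uniform in $\eps$ and $h$ by Proposition~\ref{prop:LY}.

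\emph{Step 3 (iteration).} Write $f_{m}:=\Lop{h_{m}}\cdots\Lop{h_{1}}f$. Step~2 gives $\norm{f_{m}'}_{1}\le\lambda_{1}\norm{f_{m-1}'}_{1}+B\norm{f_{m-1}}_{1}$, and the $L^{1}$ contraction gives $\norm{f_{m-1}}_{1}\le\norm f_{1}$. Induction then yields
\[
\norm{f_{n}'}_{1}\le\lambda_{1}^{n}\norm{f'}_{1}+\frac{B}{1-\lambda_{1}}\norm f_{1}.
\]
Adding $\norm{f_{n}}_{1}\le\norm f_{1}$ gives $\norm{f_{n}}_{\W1}\le C_{u}\norm f_{\W1}$ with $C_{u}:=1+B/(1-\lambda_{1})\ge1$, which is independent of $n$, $\eps$ and the densities.

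The only delicate point is Step~2. There we must check that the $L^{2}$ term in \eqref{eq:LY1} can be absorbed via interpolation, which relies on the bound $\norm f_{\infty}\le\norm f_{\W1}$ on $[0,1]$. We must also check that the $L^{1}$ contraction holds for signed $f$, not just for densities; this follows from the duality identity defining $\Lop h$.
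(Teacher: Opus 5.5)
Your proof is correct, and it follows a different route from the paper's.

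\textbf{The paper's route.} The paper keeps the $L^{2}$ weak norm of \eqref{eq:s1}. It controls $\norm{F_{\ell-1}}_{2}$ uniformly in $\ell$ through Proposition~\ref{prop:supbd}, which is the uniform sup-norm bound $\norm{F_{\ell}}_{\infty}\le\hat M\norm{f}_{\infty}$ for arbitrary non-autonomous compositions. That bound rests on three ingredients:
\begin{itemize}
\item the Lasota--Yorke inequality in Keller's space $\bvt$ (Proposition~\ref{prop:LYBVapp}, hence Lemma~\ref{lem:UBV});
\item the embedding $\norm{\cdot}_{\infty}\le\hat C_{\infty}\norm{\cdot}_{1,1/2}$;
\item positivity, through $\abs{F_{\ell}}\le\norm{f}_{\infty}\,\Lop{h_{\ell}}\cdots\Lop{h_{1}}\mathbf 1$.
\end{itemize}
This yields $C_{u}=1+M\hat M/(1-\lambda)$.

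\textbf{Your route.} You eliminate the $L^{2}$ term instead. The interpolation $\norm{f}_{2}^{2}\le\norm{f}_{\infty}\norm{f}_{1}\le\norm{f}_{\W1}\norm{f}_{1}$, combined with Young's inequality, lets you absorb $M_{0}\delta\norm{f'}_{1}$ into the contraction rate. This turns \eqref{eq:LY1} into a genuine Lasota--Yorke inequality with weak norm $L^{1}$. The $L^{1}$-contraction of every transfer operator then makes the geometric iteration uniform in $n$.

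\textbf{Comparison.} Your argument is more elementary and self-contained. It uses nothing beyond Proposition~\ref{prop:LY} and the one-dimensional bound $\norm{f}_{\infty}\le\norm{f}_{\W1}$, and your $C_{u}$ depends only on $\lambda$ and $M_{0}$. The paper's argument needs the whole generalised-BV machinery of the appendix. In exchange it gives a stronger by-product: uniform $L^{\infty}$ control of all compositions.

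Your $L^{1}$ one-step inequality also gives a version of Lemma~\ref{lem:comp1} with an $n$-independent constant:
\[
\norm{f_{n}}_{\W1}\le\lambda_{1}^{n}\norm{f}_{\W1}+\frac{1-\lambda_{1}+B}{1-\lambda_{1}}\norm{f}_{1}.
\]
The price is the slightly worse rate $\lambda_{1}$ in place of $\lambda$. This removes the exponential factor coming from $\Astar^{\,n}$ in $L^{2}$.
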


\begin{proof}
Write $F_{\ell}:=\Lop{h_{\ell}}\cdots\Lop{h_{1}}f$ for $1\le\ell\le n$, with
$F_{0}:=f$. By \eqref{eq:s1} we have 
$$\norm{F_{\ell}}_{\W{1}}\le\lambda\norm{F_{\ell-1}}_{\W{1}}
+M\norm{F_{\ell-1}}_{2}.$$ Since $\W1\subset\bvt$, Proposition~\ref{prop:supbd}
applies to every $f\in\W{1}$ and gives a constant $\hat M\ge1$, uniform in $n$, in
$\varepsilon$ and in the admissible densities $h_{1},\dots,h_{n}$, such that
\begin{equation}
\norm{F_{\ell}}_{\infty}\le\hat M\norm{f}_{\infty}, \qquad 0\le\ell\le n.
\label{eq:Uinf}
\end{equation}
Combining \eqref{eq:embed} with \eqref{eq:Uinf} gives
$\norm{F_{\ell-1}}_{2}\le\norm{F_{\ell-1}}_{\infty}\le\hat M\norm{f}_{\infty}
\le\hat M\norm{f}_{\W{1}}$. Iterating,
\[
\norm{F_{n}}_{\W{1}}
\le\lambda^{n}\norm{f}_{\W{1}}
+M\hat M\norm{f}_{\W{1}}\sum_{\ell=0}^{n-1}\lambda^{\ell}
\le\Bigl(1+\frac{M\hat M}{1-\lambda}\Bigr)\norm{f}_{\W{1}},
\]
so the lemma holds with $C_{u}:=1+\dfrac{M\hat M}{1-\lambda}$.
\end{proof}

\begin{lemma}\label{lem:comp2}
There is $C_{2}\ge0$ such that for all $n$, $\varepsilon$, admissible
densities $h_{1},\dots,h_{n}$ and $g\in\W{2}$,
\[
\norm{\Lop{h_{n}}\cdots\Lop{h_{1}}g}_{\W{2}}
\le\lambda^{2n}\norm{g}_{\W{2}}+C_{2}\norm{g}_{\W{1}} .
\]
\end{lemma}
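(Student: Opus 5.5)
The argument is the $\W2$ analogue of Lemma~\ref{lem:unif1}. We iterate the one-step inequality \eqref{eq:LY2} along the composition and control the lower-order terms by the uniform $\W1$ bound.

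Write $G_\ell:=\Lop{h_\ell}\cdots\Lop{h_1}g$ for $1\le\ell\le n$, with $G_0:=g$. Since $g\in\W2\subset\W1$, each $G_\ell$ lies in $\W2$ by \eqref{eq:LY2}, so the one-step bound can be applied at every stage:
\[
\norm{G_\ell}_{\W2}\le\lambda^{2}\norm{G_{\ell-1}}_{\W2}+M_0\norm{G_{\ell-1}}_{\W1}.
\]
Unrolling this recursion gives
\[
\norm{G_n}_{\W2}\le\lambda^{2n}\norm g_{\W2}+M_0\sum_{\ell=0}^{n-1}\lambda^{2(n-1-\ell)}\norm{G_\ell}_{\W1}.
\]

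The key input is now Lemma~\ref{lem:unif1}. It gives $\norm{G_\ell}_{\W1}\le C_u\norm g_{\W1}$ for every $\ell$, uniformly in $\ell$, $\varepsilon$ and the admissible densities; for $\ell=0$ the bound is trivial since $C_u\ge1$. Substituting and summing the geometric series yields
\[
\norm{G_n}_{\W2}\le\lambda^{2n}\norm g_{\W2}+\frac{M_0C_u}{1-\lambda^{2}}\norm g_{\W1}.
\]
So the lemma holds with $C_2:=M_0C_u/(1-\lambda^2)$, which is independent of $n$, $\varepsilon$ and the $h_i$.

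The only potential obstacle is uniformity in $n$. A naive use of Lemma~\ref{lem:comp1} for the $\W1$ terms would produce constants $M_\ell$ that grow exponentially in $\ell$, and the sum would then not be bounded independently of $n$. This is exactly why Lemma~\ref{lem:unif1}, which rests on the sup-norm bound of Proposition~\ref{prop:supbd}, is used instead. Given that lemma, the rest is a routine geometric sum.
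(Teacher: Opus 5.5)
Your proposal is correct and is essentially the paper's proof: iterate the one-step bound \eqref{eq:LY2}, control every $\W{1}$ term uniformly in $\ell$ via Lemma~\ref{lem:unif1}, and sum the geometric series to obtain the same constant $C_{2}=M_{0}C_{u}/(1-\lambda^{2})$. Your remark that Lemma~\ref{lem:comp1} would not suffice, because its constants $M_{\ell}$ grow exponentially in $\ell$, correctly identifies why the uniform bound of Lemma~\ref{lem:unif1} is needed here.
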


\begin{proof}
Let $b_{k}:=\norm{\Lop{h_{k}}\cdots\Lop{h_{1}}g}_{\W{2}}$. Applying first 
\eqref{eq:LY2} and then Lemma~\ref{lem:unif1} we obtain
\[
b_{k}\le\lambda^{2}b_{k-1}+M_{0}\norm{\Lop{h_{k-1}}\cdots\Lop{h_{1}}g}_{\W{1}}
\le\lambda^{2}b_{k-1}+M_{0}C_{u}\norm{g}_{\W{1}} .
\]
Iterating gives
$$b_{n}\le\lambda^{2n}\norm{g}_{\W{2}}
+\frac{M_{0}C_{u}}{1-\lambda^{2}}\norm{g}_{\W{1}},$$ with
$C_{2}:=M_{0}C_{u}/(1-\lambda^{2})$.
\end{proof}

We are now ready to prove spectral gap for the unperturbed operator. This is one of the key results for obtaining memory loss results below. 

\begin{proposition}\label{prop:gap}
There exist $C_{\mathrm{gap}}\ge1$ and $\sigma\in(0,1)$ such that for every
$g\in\W{1}$ with $\int g\,dm=0$,
\[
\norm{\Lz^{\,m}g}_{\W{1}}\le C_{\mathrm{gap}}\sigma^{m}\norm{g}_{\W{1}},
\qquad m\in\mathbb N .
\]
\end{proposition}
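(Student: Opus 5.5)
The approach is the standard Hennion/Ionescu-Tulcea--Marinescu quasi-compactness argument for $\Lz$ acting on $\W1$, combined with the mixing assumption (A5) to isolate the eigenvalue $1$.

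\textbf{Quasi-compactness.} Specializing Lemma~\ref{lem:comp1} to $h_1=\dots=h_n$ with $\eps=0$ gives the Lasota--Yorke inequality
\[
\norm{\Lz^n g}_{\W1}\le\lambda^n\norm g_{\W1}+M_n\norm g_2 .
\]
The inclusion $\W1\hookrightarrow L^2[0,1]$ is compact, by Rellich--Kondrachov, or directly since bounded sets in $\W1$ are uniformly bounded and of uniformly bounded variation, hence precompact in $L^1$, and uniform boundedness upgrades this to $L^2$. Lemma~\ref{lem:unif1} bounds $\Lz$ uniformly in $\W1$, so the spectral radius is at most $1$. Hennion's theorem then shows that the essential spectral radius of $\Lz$ on $\W1$ is at most $\lambda<1$. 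Hence the spectrum outside the disc of radius $\lambda$ consists of finitely many eigenvalues of finite multiplicity, and the peripheral ones are semisimple.

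\textbf{Peripheral spectrum.} Since $\Lz$ preserves integrals and $\W1\subset L^\infty$ is power bounded, $1$ is an eigenvalue. Its eigenfunctions are absolutely continuous invariant densities, up to sign decomposition, because $\Lz$ is a positive operator. I would show the peripheral spectrum is exactly $\{1\}$ with a one-dimensional eigenspace spanned by a density $h_0$, as follows. Take $\Lz\varphi=e^{i t}\varphi$ with $\varphi\in\W1$. Then $|\varphi|\le\Lz|\varphi|$, and integrating forces equality, so $|\varphi|$ is invariant. Standard arguments for piecewise expanding maps give that the support of any invariant density contains an interval. Using expansion (A4), the image of such an interval eventually covers a neighbourhood of a singularity or grows in length, and topological mixing on $E_0$ (A5) then shows that the support is all of $E_0$ and that no nontrivial roots of unity occur: a periodic cycle of supports would contradict mixing. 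The same positivity argument gives uniqueness of the invariant density.

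\textbf{Conclusion and main obstacle.} With $\Pi g:=h_0\int g\,dm$ the spectral projection, we get $\Lz^m-\Pi=(\Lz-\Pi)^m$, whose spectral radius is some $\sigma_0<1$. This gives $\norm{\Lz^m g}_{\W1}\le C_{\mathrm{gap}}\sigma^m\norm g_{\W1}$ for $\int g=0$, for any $\sigma\in(\sigma_0,1)$. The main obstacle is the peripheral-spectrum step. The cusps make $T_0'$ unbounded, so one must check that densities in $\W1$ have supports containing intervals, which follows from continuity. One must also check that images of intervals under $T_0$ stay nondegenerate despite the cusps; $T_0$ is injective and continuous on each $I_i$, so images of intervals are intervals of length at least $\theta$ times the original, up to cutting at the $c_j$. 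I would follow the argument of \cite{BG} for the single-cusp case.
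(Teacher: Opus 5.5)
Your proposal is correct in outline, but it takes a different route from the paper at the peripheral-spectrum step. The other steps match the paper's: quasi-compactness via the Lasota--Yorke inequality, the compact embedding $\W{1}\hookrightarrow L^{2}$, Hennion's theorem and power-boundedness from Lemma~\ref{lem:unif1}, and the final rank-one projection $\Pi$.

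The paper never analyses eigenfunctions directly. It places $w_{T_0}$ in $\mathrm{UBV}_2$ (Lemma~\ref{lem:UBV}) and imports Keller's structure theorem \cite[Theorem 3.3 \& 3.5]{Ke85}. That theorem gives finitely many invariant densities on disjoint finite unions of intervals, permuted cyclically by $T_0$. Only then does the paper use (A5), together with continuity of $T_0^n$ on monotonicity intervals, to exclude $r\ge2$ or a cycle of length $q\ge2$. You instead use that unimodular eigenfunctions lie in $\W{1}$ and are therefore continuous. So for any continuous invariant $h\ge0$, the set $S=\{h>0\}$ is open and, by \eqref{eq:explicit}, satisfies $T_0(S\setminus\{c_1,\dots,c_k\})\subset S$. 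This is more elementary and avoids Keller's structure theory. It does rely on features special to this setting: continuity of $\W{1}$ functions, and continuity of $T_0$ from (A2). The paper's route gives more structural information and is the standard template for piecewise expanding maps.

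Two points in your sketch need tightening.

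\textbf{Support of invariant densities.} Mixing only shows that $S$ is open and dense in $E_0$, not that it is all of $E_0$. Density is enough. For a real mean-zero eigenfunction $f$, the functions $f^{\pm}$ are invariant with disjoint, open, forward-invariant supports. Mixing then gives a point of $\{f>0\}$, chosen off the finitely many preimages of singularities, whose $n$-th iterate lies in $\{f<0\}$, a contradiction. Your remark about images of intervals growing in length plays no role.

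\textbf{Nontrivial unimodular eigenvalues.} The phrase ``a periodic cycle of supports would contradict mixing'' assumes that an eigenvalue $e^{it}\neq1$ produces such a cycle, and you never derive this. In your framework a direct argument is quicker. Normalize so that $\abs{\varphi}=h_0$ and write $\varphi=h_0\chi$. Equality in $\abs{\Lz\varphi}\le\Lz\abs{\varphi}$ forces $\chi(x)=e^{it}\chi(T_0x)$ for every non-singular $x\in S$, and $\chi$ is continuous on $S$. Take a small interval $O\subset S$ on which $\chi$ oscillates by less than $\delta$. Mixing gives points $x\in O$ with $T_0^{n}x\in O$ for $n=N$ and $n=N+1$, chosen off the finitely many bad preimages. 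This yields $\abs{e^{int}-1}<\delta$ for both values of $n$, hence $\abs{e^{it}-1}<2\delta$, and so $e^{it}=1$.
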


\begin{proof}
By the one-step inequality \eqref{eq:s1} the operator
$\Lz$ satisfies $$\norm{\Lz f}_{\W1}\le\lambda\norm f_{\W1}+M\norm f_{2}$$ and by \eqref{eq:L2} it is bounded on $L^{2}$. The
embedding $\W{1}(0,1)\hookrightarrow L^{2}(0,1)$ is compact by the Rellich–Kondrakov theorem. Therefore, Lemma 2.2 of  \cite{BarGK} implies quasi-compactness of
$\Lz$ on $\W{1}$, with essential spectral radius at most $\lambda<1$. Thus, any point of the spectrum outside the ball $|z|>\lambda$ must be an eigenvalue. Since $1\circ T_0=1$, 1 is in the spectrum. This implies that $(L_{T0})^*1=1$ and hence an eigenvalue. 
Since $\Lz$ is a power-bounded, quasi-compact operator on $W^{1,1}$ preserving the integral, its spectral radius is $r(\Lz)=1$ (We refer to \cite{AB, Baladi} for general properties of transfer operators.) 

By Lemma~\ref{lem:UBV} the
inverse derivative $w_{T_{0}}$ lies in $\mathrm{UBV}_{2}$, and
$\norm{w_{T_{0}}}_{\infty}\le\theta^{-1}<1$ by (A4). Therefore \cite[Theorem 3.3 \& 3.5]{Ke85} applies to $T_{0}$:
the peripheral spectrum is finite, and there are finitely many non-negative
densities $\varphi_{1},\dots,\varphi_{r}$ with pairwise disjoint supports
$\Sigma_{1},\dots,\Sigma_{r}$, each equal modulo $m$ to a finite union of
non-degenerate intervals, which $T_{0}$ permutes; the peripheral eigenvalues are
roots of unity whose orders are the lengths of the cycles of this permutation.

Now we show that $1$ is a simple eigenvalue.  Suppose the
decomposition were non-trivial, i.e.\ either $r\ge2$ or a cycle has length
$q\ge2$. Notice that, each
$\varphi_{s}$ is a non-negative element of $\bv$ with $\int\varphi_{s}\,dm>0$, $s=1,...,r$. Therefore, there exists  a non-degenerate interval on which
$\varphi_{s}>0$ almost everywhere. Moreover, every invariant density is supported
in $\overline{E_{0}}$ modulo $m$: by the branch representation
\eqref{eq:explicit}, $\Lz f$ vanishes almost everywhere outside
$E_{0}=\bigcup_{i=0}^{k}T_{i,0}(I_{i})$ for every $f\in L^{1}$, hence so does
$\Lz\varphi_{s}=\varphi_{s}$. Consequently each $\Sigma_{s}$ contains, modulo
$m$, a non-degenerate interval of $E_{0}$.
Then there are two of the sets above, say $\Sigma$ and $\Sigma'$,
disjoint modulo $m$, and $q\ge2$ with
\begin{equation}\label{eq:cyclic}
m\bigl(T_{0}^{-n}(\Sigma')\cap\Sigma\bigr)=0
\qquad\text{for every $n$ outside a fixed residue class modulo }q .
\end{equation}
By the previous paragraph we may choose non-empty open intervals
$O\subseteq\Sigma\cap E_{0}$ and $O'\subseteq\Sigma'\cap E_{0}$, after
discarding null sets. By (A5) there is
$N$ with $T_{0}^{-n}(O')\cap O\ne\emptyset$ for every $n\ge N$. Fix such an
$n\ge N$ lying outside the distinguished residue class and pick
$x\in T_{0}^{-n}(O')\cap O$. The point $x$ belongs to some interval $J$ of
monotonicity of $T_{0}^{n}$, on which $T_{0}^{n}$ is continuous; hence
$(T_{0}^{n}|_{J})^{-1}(O')$ is relatively open in $J$ and contains $x$, so
$T_{0}^{-n}(O')\cap O$ contains a non-empty open interval and therefore has
positive Lebesgue measure. This contradicts \eqref{eq:cyclic}. Hence $r=1$ and
$q=1$: the eigenvalue $1$ is simple and there is no further peripheral spectrum.

It is now standard to  write $\Lz=\Pi+Q$, where
\[
\Pi f:=\Bigl(\int_{0}^{1}f\,dm\Bigr)\varphi_{1},
\qquad\int_{0}^{1}\varphi_{1}\,dm=1,
\]
is the rank-one spectral projection associated with the eigenvalue $1$ and  $Q:=\Lz(\mathrm{Id}-\Pi)$, so that $\Pi Q=Q\Pi=0$. By the previous step  the spectral
radius of $Q$ on $\W1$ is some $\sigma_{0}<1$. Fix $\sigma\in(\sigma_{0},1)$. The
spectral radius formula gives $C_{\mathrm{gap}}\ge1$ with
$\norm{Q^{m}}_{\W1\to\W1}\le C_{\mathrm{gap}}\sigma^{m}$ for every $m$. If
$g\in\W1$ has $\int g\,dm=0$ then $\Pi g=0$, hence $\Lz^{\,m}g=Q^{m}g$ and
$$\norm{\Lz^{\,m}g}_{\W1}\le C_{\mathrm{gap}}\sigma^{m}\norm g_{\W1}.$$
\end{proof}

\subsection{Memory loss} Next we show exponential memory loss for  zero average functions. For $K\ge0$ let
\[
B(K):=\Bigl\{g\in L^{1}: g\ge0,\ \int g\,dm=1,\
\norm{g}_{\W{1}}\le K\Bigr\}.
\]
\begin{lemma}\label{lem:memloss}
Fix $K\ge0$. There exist $\varepsilon_{0}=\varepsilon_{0}(K)>0$,
$C_{\mathrm{ml}}=C_{\mathrm{ml}}(K)\ge1$ and $\gamma=\gamma(K)\in(0,1)$
such that for every $\varepsilon\in[0,\varepsilon_{0})$, every
$n\in\mathbb N_{0}$, all densities $h_{1},\dots,h_{n}\in B(K)$ and
every $g\in\W{1}$ with $\int g\,dm=0$,
\[
\norm{\Lop{h_{n}}\cdots\Lop{h_{1}}g}_{\W{1}}
\le C_{\mathrm{ml}}\,\gamma^{\,n}\norm{g}_{\W{1}} .
\]
\end{lemma}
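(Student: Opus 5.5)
The plan is a standard perturbative argument: compare the nonautonomous product with $\Lz^{\,N}$ over a block of fixed length $N$, use the spectral gap of Proposition~\ref{prop:gap} to get a contraction factor on that block, and then concatenate blocks. All operators $\Lop{h}$ preserve the integral, since they are transfer operators. Hence any product of them maps zero-average functions to zero-average functions, and every intermediate image stays in the class where Proposition~\ref{prop:gap} applies.

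\emph{Step 1: the one-block estimate.} Fix $N$, and for $h_{1},\dots,h_{N}\in B(K)$ write the telescoping identity
\[
\Lop{h_{N}}\cdots\Lop{h_{1}}g-\Lz^{\,N}g=\sum_{j=1}^{N}\Lop{h_{N}}\cdots\Lop{h_{j+1}}\bigl(\Lop{h_{j}}-\Lz\bigr)\Lz^{\,j-1}g .
\]
I would bound each term by splitting $\Lop{h_{N}}\cdots\Lop{h_{j+1}}=\mathcal P_{j}$ according to Lemma~\ref{lem:comp1}:
\[
\norm{\mathcal P_{j}u}_{\W1}\le\lambda^{N-j}\norm u_{\W1}+M_{N-j}\norm u_{2},\qquad u=(\Lop{h_{j}}-\Lz)\Lz^{\,j-1}g .
\]
The $L^{2}$ part is the small one. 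By (A9) and $\norm{h_{j}}_{2}\le\norm{h_j}_{\W1}\le K$, together with Lemma~\ref{lem:unif1},
\[
\norm u_{2}\le\hat C\varepsilon(1+K)C_{u}\norm g_{\W1}.
\]
The $\W1$ part is not small in $\varepsilon$, but it carries $\lambda^{N-j}$. Using Lemma~\ref{lem:unif1} twice gives $\norm u_{\W1}\le 2C_{u}^{2}\norm g_{\W1}$.

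\emph{Step 2: handling the non-small terms.} This is the main obstacle. Summing the $\lambda^{N-j}$ contributions over $j$ gives only a bounded, non-small quantity, which is useless. I would therefore not telescope against $\Lz^{\,N}$ directly. Instead, set $N=2L$ and compare only the first $L$ steps with $\Lz^{\,L}$, then apply the remaining $L$ operators:
\[
\Lop{h_{2L}}\cdots\Lop{h_{1}}g=\mathcal Q\,\Lz^{\,L}g+\mathcal Q\bigl(\Lop{h_{L}}\cdots\Lop{h_{1}}-\Lz^{\,L}\bigr)g,\qquad \mathcal Q:=\Lop{h_{2L}}\cdots\Lop{h_{L+1}} .
\]
By Lemma~\ref{lem:unif1} and Proposition~\ref{prop:gap}, the first term is at most $C_{u}C_{\mathrm{gap}}\sigma^{L}\norm g_{\W1}$. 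For the second term, the difference $D$ has zero average. Lemma~\ref{lem:comp1} applied to $\mathcal Q$ gives
\[
\norm{\mathcal Q D}_{\W1}\le\lambda^{L}\norm D_{\W1}+M_{L}\norm D_{2}\le 2C_{u}\lambda^{L}\norm g_{\W1}+M_{L}\norm D_{2}.
\]
It remains to make $\norm D_{2}=O(\varepsilon)$. In the telescoping sum for $D$, each term is an $L^{2}$-norm of a product applied to $u$, and that product is bounded on $L^{2}$ by $\Astar^{\,L}$ via \eqref{eq:L2}. Hence
\[
\norm D_{2}\le L\Astar^{\,L}\hat C\varepsilon(1+K)C_{u}\norm g_{\W1}.
\]

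\emph{Step 3: choosing parameters and iterating.} First choose $L$ so that $C_{u}C_{\mathrm{gap}}\sigma^{L}+2C_{u}\lambda^{L}\le\tfrac14$. Then choose $\varepsilon_{0}(K)$ so that $M_{L}L\Astar^{\,L}\hat C\varepsilon_{0}(1+K)C_{u}\le\tfrac14$. This order is legitimate because $L$ is fixed before $\varepsilon$, as in the remark after Lemma~\ref{lem:comp1}.

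With these choices every block of length $2L$ contracts zero-average functions by a factor $\tfrac12$ in $\W1$, uniformly over densities in $B(K)$. For general $n=2Lq+r$ with $0\le r<2L$, apply $q$ block contractions and bound the remaining $r$ steps by Lemma~\ref{lem:unif1}. This yields the claim with $\gamma=2^{-1/(2L)}$ and $C_{\mathrm{ml}}=2C_{u}$.
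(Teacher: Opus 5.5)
Your proof is correct and takes essentially the same route as the paper. Both arguments use a fixed-length block in which Lemma~\ref{lem:comp1} turns the $O(\varepsilon)$ $L^{2}$-distance to $\Lz^{\,L}g$ (obtained by telescoping with (A9) and \eqref{eq:L2}) into a $\W1$ contraction together with the spectral gap of Proposition~\ref{prop:gap}, and then iterate over blocks. The only difference is bookkeeping. The paper applies Lemma~\ref{lem:comp1} to the whole product, so it must choose the second length $k_*$ after $n_*$ to beat $M_{n_*}C_{\mathrm{gap}}\sigma^{k_*}$. You instead bound $\mathcal Q\Lz^{\,L}g$ directly in $\W1$ via Lemma~\ref{lem:unif1}, which lets you use two halves of equal length $L$.
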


\begin{proof}
Since $h_{i}\in B(K)$, by \eqref{eq:embed}
\begin{equation}\label{eq:hb}
\norm{h_{i}}_{2}\le K .
\end{equation}

Let $n,k\in\mathbb N$ and let $g$ be
mean-zero. Splitting off the last $n$ operators,
Lemma~\ref{lem:comp1} gives
\begin{equation}\label{eq:split}
\norm{\Lop{h_{n+k}}\cdots\Lop{h_{1}}g}_{\W{1}}
\le\lambda^{n}\norm{\Lop{h_{k}}\cdots\Lop{h_{1}}g}_{\W{1}}
+M_{n}\norm{\Lop{h_{k}}\cdots\Lop{h_{1}}g}_{2},
\end{equation}
and the first term is $\le\lambda^{n}C_{u}\norm{g}_{\W{1}}$ by
Lemma~\ref{lem:unif1}. For the second term, the telescoping identity
\[
\Lop{h_{k}}\cdots\Lop{h_{1}}-\Lz^{\,k}
=\sum_{i=1}^{k}\Lop{h_{k}}\cdots\Lop{h_{i+1}}
\bigl(\Lop{h_{i}}-\Lz\bigr)\Lz^{\,i-1}
\]
together with \eqref{eq:L2} yields
\[
\norm{\Lop{h_{k}}\cdots\Lop{h_{1}}g}_{2}
\le\sum_{i=1}^{k}\Astar^{\,k-i}
\norm{(\Lop{h_{i}}-\Lz)\Lz^{\,i-1}g}_{2}+\norm{\Lz^{\,k}g}_{2}.
\]
By (A9), \eqref{eq:hb} and Lemma~\ref{lem:unif1},
\[
\norm{(\Lop{h_{i}}-\Lz)\Lz^{\,i-1}g}_{2}
\le\hat C\varepsilon(1+K)\norm{\Lz^{\,i-1}g}_{\W{1}}
\le\hat C\varepsilon(1+K)C_{u}\norm{g}_{\W{1}},
\]
while, since $g$ is mean-zero, Proposition~\ref{prop:gap} and
\eqref{eq:embed} give
$\norm{\Lz^{\,k}g}_{2}\le C_{gap}\sigma^{k}\norm{g}_{\W{1}}$.
Substituting into \eqref{eq:split} and using
$\sum_{i=1}^{k}\Astar^{k-i}\le (1+\Astar)^{k}$,
\begin{equation}\label{eq:block}
\norm{\Lop{h_{n+k}}\cdots\Lop{h_{1}}g}_{\W1}
\le\Bigl[\lambda^{n}C_{u}
+M_{n}\hat C(1+K)C_{u}(\Astar+1)^{k}\eps
+M_{n}C_{gap}\sigma^{k}\Bigr]\norm g_{\W1}.
\end{equation}

Set $\gamma_{*}:=\tfrac12$ and choose the constants in the following order.
First fix $n_{*}$ with $\lambda^{n_{*}}C_{u}\le\gamma_{*}/3$; this also fixes
$M_{n_{*}}=(\Astar+1)^{n_{*}}M$. Next fix $k_{*}$ so that
$M_{n_{*}}C_{gap}\sigma^{k_{*}}\le\gamma_{*}/3$. Finally, we choose
$\eps_{0}(K)>0$ so small that
$M_{n_{*}}\hat C(1+K)C_{u}(\Astar+1)^{k_{*}}\eps\le\gamma_{*}/3$ for every
$\eps<\eps_{0}(K)$. Because $n_{*}$ and $k_{*}$ are fixed before $\eps$, the
exponential growth of the constants in $\Astar$ causes no problem. Setting
$N:=n_{*}+k_{*}$, the bound \eqref{eq:block} yields
\begin{equation}\label{eq:oneblock}
\norm{\Lop{h_{N}}\cdots\Lop{h_{1}}g}_{\W1}\le\gamma_{*}\norm g_{\W1}
\end{equation}
for every $\varepsilon\in[0,\varepsilon_{0})$, every mean-zero $g\in\W{1}$ and
all $h_{1},\dots,h_{N}\in B(K)$.

It remains to iterate this estimate. 
Let $n=qN+r$, $0\le r<N$. Since each block acts on a mean-zero function, the upper bound 
\eqref{eq:oneblock} applies $q$ times; the remaining $r$ operators are
absorbed by Lemma~\ref{lem:unif1}:
\[
\norm{\Lop{h_{n}}\cdots\Lop{h_{1}}g}_{\W{1}}
\le C_{u}\gamma_{*}^{\,q}\norm{g}_{\W{1}}
\le\frac{C_{u}}{\gamma_{*}}\bigl(\gamma_{*}^{1/N}\bigr)^{n}
\norm{g}_{\W{1}},
\]
since $q\ge n/N-1$. Setting $\gamma:=\gamma_{*}^{1/N}\in(0,1)$ and
$C_{\mathrm{ml}}:=\max(1,C_{u}/\gamma_{*})$ finishes the proof.
\end{proof}

\subsection{An exponentially stable invariant set}
We now construct an invariant set in $W^{2,1}$ with respect to the transfer operators. Set 
\begin{equation}\label{eq:constants}
K':=\max\Bigl(1,\frac{C_{\mathrm{ap}}}{1-\vartheta}\Bigr),\qquad
K_{1}:=\max\Bigl(1,\frac{M_{0}\hat C_{\infty}K'+1}{1-\lambda}\Bigr),\qquad
K_{2}:=\max\Bigl(1,\frac{M_{0}K_{1}}{1-\lambda^{2}}\Bigr),
\end{equation}
and define
\begin{equation}\label{eq:barB}
\bar B:=\Bigl\{g\ge0:\ \int g\,dm=1,\
\norm{g}_{1,1/p}\le K',\ \norm{g}_{\W{1}}\le K_{1},\
\norm{g}_{\W{2}}\le K_{2}\Bigr\}.
\end{equation}

\begin{proposition}\label{prop:inv}
$\bar B$ is nonempty and convex, $\mathbf 1\in\bar B$, and
$\Le(\bar B)\subseteq\bar B$ for every $\varepsilon\in[0,\varepsilon_{0})$.
Moreover, for every density $h$ with $\norm{h}_{\W{1}}\le K_{1}$ the
whole trajectory satisfies
$$\sup_{k\ge0}\norm{\Le^{k}(h)}_{\W{1}}\le C_{u}K_{1}=:K_{*}.$$
\end{proposition}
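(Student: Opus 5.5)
The plan is to check the three claims in turn: first that $\mathbf 1\in\bar B$ and $\bar B$ is convex, then the invariance $\Le(\bar B)\subseteq\bar B$, and finally the uniform trajectory bound. Membership of the constant density is immediate. Indeed $\mathbf 1\ge0$ and $\int\mathbf 1\,dm=1$. Its $\W1$- and $\W2$-norms equal $1$, which is at most $K_1$ and $K_2$ since both are $\ge1$. Its $\bv$-norm is $\norm{\mathbf 1}_1$ plus a vanishing oscillation term, hence $1\le K'$. Convexity holds because each defining condition is convex: positivity and the normalisation are affine, and each norm bound is a ball.

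For invariance, fix $g\in\bar B$ and put $f:=\Le(g)=\Lop{g}g$. Positivity and $\int f=1$ follow from the defining duality with test function $1$. For the $\bv$ bound we use the uniform Lasota--Yorke inequality on $\bv$ from the Appendix, $\norm{\Lop h f}_{1,1/p}\le\vartheta\norm f_{1,1/p}+C_{\mathrm{ap}}\norm f_1$, which is where $C_{\mathrm{ap}}$ and $\vartheta$ in \eqref{eq:constants} come from. It gives
\[
\norm{f}_{1,1/p}\le\vartheta K'+C_{\mathrm{ap}}\le K',
\]
since $K'\ge C_{\mathrm{ap}}/(1-\vartheta)$. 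For the $\W1$ bound we use \eqref{eq:LY1} together with the embedding $\norm{g}_2\le\norm g_\infty\le\hat C_\infty\norm g_{1,1/p}$, where we must use $\norm g_{1,1/p}$ rather than $\norm g_{\W1}$ to avoid circularity. Writing $\norm f_{\W1}=\norm{f'}_1+\norm f_1=\norm{f'}_1+1$, we get
\[
\norm{f}_{\W1}\le\lambda\norm{g'}_1+M_0\hat C_\infty K'+1\le\lambda K_1+M_0\hat C_\infty K'+1\le K_1,
\]
which is exactly the choice of $K_1$. For the $\W2$ bound, \eqref{eq:LY2} gives $\norm f_{\W2}\le\lambda^2K_2+M_0K_1\le K_2$ by the choice of $K_2$. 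There is a mismatch to resolve here: the constant in $K_2$ carries $1-\lambda^2$, while \eqref{eq:LY2} has $\lambda^2$ in front. These are consistent, since $\lambda^2K_2+M_0K_1\le K_2$ is equivalent to $K_2\ge M_0K_1/(1-\lambda^2)$. Note also that $\Lop g$ is a transfer operator of the family with $g\in\mathcal B_1$, so all the uniform constants apply. Everything holds for every $\varepsilon\in[0,\varepsilon_0)$.

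For the trajectory bound, set $h_0:=h$ and $h_{j+1}:=\Lop{h_j}h_j$. Then $\Le^k(h)=\Lop{h_{k-1}}\cdots\Lop{h_0}h$, a composition of $k$ transfer operators from the family evaluated on densities. Each $h_j$ is a density, since the transfer operators preserve positivity and integral. Lemma~\ref{lem:unif1} then gives $\norm{\Le^k(h)}_{\W1}\le C_u\norm h_{\W1}\le C_uK_1$ uniformly in $k$.

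The main obstacle is admissibility in Lemma~\ref{lem:unif1}: it requires $h_j\in\W1$, which must itself be propagated. We do this inductively, since \eqref{eq:s1} shows that $\Lop{h_j}$ maps $\W1$ into $\W1$. A second concern is that the supremum-norm bound \eqref{eq:Uinf} used inside that lemma relies on the Appendix's Proposition~\ref{prop:supbd}, and we simply take that result as given.
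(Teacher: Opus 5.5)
Your proposal is correct and follows the paper's proof essentially line for line: you check membership of $\mathbf 1$ and convexity by inspection, get the $\bv$ bound from Proposition~\ref{prop:LYBVapp}, get the $\W1$ bound from \eqref{eq:LY1} using $\norm g_2\le\hat C_\infty\norm g_{1,1/p}\le\hat C_\infty K'$ (you rightly note that using $\norm g_{\W1}$ there would not close the estimate), get the $\W2$ bound from \eqref{eq:LY2}, and get the trajectory bound from Lemma~\ref{lem:unif1}. One small imprecision: positivity of $\Le(g)$ comes from positivity of the transfer operator (equivalently, duality against nonnegative test functions), not from testing against $\mathbf 1$, which only gives the normalisation.
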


\begin{proof}
$\mathbf 1$ satisfies $\norm{\mathbf 1}_{1,1/p}
=\norm{\mathbf1}_{\W{1}}=\norm{\mathbf1}_{\W{2}}=1$, so $\mathbf1\in\bar B$ by \eqref{eq:constants}. Convexity
is clear since each defining condition is convex. Let $g\in\bar B$.
Recall that $\Le(g)=\Lop{g}g$, which is again a density. By
Proposition~\ref{prop:LYBVapp},
\[
\norm{\Le g}_{1,1/p}\le\vartheta K'+C_{\mathrm{ap}}\le K'
\]
by the choice of $K'$. Next, $\norm{\Le g}_{1}=1$ since $\Le g$ is a density.
Therefore, using
$\norm{g}_{2}\le\norm{g}_{\infty}\le\hat C_{\infty}\norm{g}_{1,1/p}
\le\hat C_{\infty}K'$ implies 
\[
\norm{\Le g}_{\W{1}}
\le\lambda\norm{g'}_{1}+M_{0}\norm{g}_{2}+1
\le\lambda K_{1}+M_{0}\hat C_{\infty}K'+1\le K_{1}
\]
by the choice of $K_{1}$. Finally, by \eqref{eq:LY2},
$\norm{\Le g}_{\W{2}}\le\lambda^{2}K_{2}+M_{0}K_{1}\le K_{2}$ by the
choice of $K_{2}$.  Lemma~\ref{lem:unif1}
 implies that
$\norm{\Le^{k}h}_{\W{1}}\le C_{u}\norm{h}_{\W{1}}$.
\end{proof}

\begin{remark}\label{rem:compact}
We emphasise that we do \emph{not} use compactness of $\bar B$ or of $B(K)$, in
$L^{2}$ or elsewhere. The fixed point is produced below by a Cauchy-sequence
argument, which requires no compactness. The only closedness fact we need is that
the ball $\{\norm{g}_{1,1/p}\le K'\}$ is closed in $L^{1}$, and we deduce this in
Lemma~\ref{lem:lsc} from the lower semi-continuity of
$\operatorname{var}_{1,1/p}$ along $L^{1}$-convergent sequences.
\end{remark}

We next prove some technical lemma, which is a discrete analogue of Gr\"onwal's lemma, that will be useful below in the iteration process.  

\begin{lemma}\label{lem:gronwall}
Let a sequence of real numbers  $(d_{n})_{n\ge0}\subset[0,\infty)$, and numbers $a\ge1$, $\gamma\in(0,1)$ and  $b\ge0$ be given. Assume that for all $n\ge 1$ the following inequality holds
$$d_{n}\le a\gamma^{n}d_{0}
+b\sum_{i=1}^{n}\gamma^{\,n-i}d_{i-1}.$$
If $b\le\frac{1-\gamma}{4}$ then 
$d_{n}\le2a\rho^{\,n}d_{0}$ holds for any $n\ge 1$ with $\rho:=\frac{1+\gamma}{2}$.
\end{lemma}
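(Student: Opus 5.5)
The plan is an induction on $n$ with the explicit hypothesis $d_{m}\le 2a\rho^{m}d_{0}$ for all $0\le m\le n-1$. Note that the case $m=0$ is trivially true since $a\ge1$. Plugging this hypothesis into the assumed recursion reduces the statement to a geometric-sum estimate.

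Fix $n\ge1$ and assume the bound for all $i-1\in\{0,\dots,n-1\}$. Then
\[
d_{n}\le a\gamma^{n}d_{0}+2ab\,d_{0}\sum_{i=1}^{n}\gamma^{\,n-i}\rho^{\,i-1}.
\]
Since $\gamma<\rho$, we factor out $\rho^{n-1}$ and bound the sum by a geometric series:
\[
\sum_{i=1}^{n}\gamma^{\,n-i}\rho^{\,i-1}
=\rho^{n-1}\sum_{j=0}^{n-1}\Bigl(\frac{\gamma}{\rho}\Bigr)^{j}
\le\frac{\rho^{n-1}}{1-\gamma/\rho}=\frac{\rho^{n}}{\rho-\gamma}=\frac{2\rho^{n}}{1-\gamma},
\]
using $\rho-\gamma=\frac{1-\gamma}{2}$. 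Hence
\[
d_{n}\le a\gamma^{n}d_{0}+\frac{4ab}{1-\gamma}\rho^{n}d_{0}\le a\rho^{n}d_{0}+a\rho^{n}d_{0}=2a\rho^{n}d_{0}.
\]
The first term uses $\gamma^{n}\le\rho^{n}$, and the second uses the hypothesis $b\le\frac{1-\gamma}{4}$. This closes the induction.

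The only delicate point is the choice of the geometric comparison: it must be done relative to $\rho$ and not $\gamma$. Otherwise the sum $\sum\gamma^{n-i}\gamma^{i-1}=n\gamma^{n-1}$ produces a polynomial factor, and the induction fails to close with the same constant. With the ratio $\gamma/\rho<1$ the sum is uniformly bounded, and the factor $\frac{1}{\rho-\gamma}=\frac{2}{1-\gamma}$ is exactly absorbed by the smallness of $b$. Everything else is routine.
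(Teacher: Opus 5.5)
Your proof is correct and follows the paper's argument essentially verbatim: strong induction with the base case from $a\ge1$, substitution into the recursion, the geometric-sum bound $\sum_{i=1}^{n}\gamma^{n-i}\rho^{i-1}\le 2\rho^{n}/(1-\gamma)$, and absorption via $b\le(1-\gamma)/4$ and $\gamma<\rho$.
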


\begin{proof}
We argue by induction on $n$. For $n=0$ the claim reads $d_{0}\le2ad_{0}$, which
holds because $a\ge1$. Let $n\ge1$ and assume $d_{i}\le2a\rho^{\,i}d_{0}$ for all
$i<n$. Since $\rho-\gamma=\frac{1-\gamma}{2}$ and $\gamma<\rho$,
\[
\sum_{i=1}^{n}\gamma^{\,n-i}\rho^{\,i-1}
=\rho^{\,n-1}\sum_{i=1}^{n}\Bigl(\frac{\gamma}{\rho}\Bigr)^{n-i}
\le\frac{\rho^{\,n-1}}{1-\gamma/\rho}
=\frac{\rho^{\,n}}{\rho-\gamma}
=\frac{2\rho^{\,n}}{1-\gamma}.
\]
Substituting the inductive hypothesis into the assumed inequality and using
$b\le\frac{1-\gamma}{4}$,
\[
d_{n}\le a\gamma^{\,n}d_{0}+2ab\,d_{0}\sum_{i=1}^{n}\gamma^{\,n-i}\rho^{\,i-1}
\le a\gamma^{\,n}d_{0}+\frac{4ab}{1-\gamma}\rho^{\,n}d_{0}
\le a\rho^{\,n}d_{0}+a\rho^{\,n}d_{0}=2a\rho^{\,n}d_{0},
\]
where we used $\gamma<\rho$ in the last line.
\end{proof}

Let now $\eps_{0}(K_{*})$, $C_{\mathrm{ml}}$, $\gamma$ be the
constants of Lemma~\ref{lem:memloss} and let $C_{2}$ be the constant from Lemma~\ref{lem:comp2}.   For the parameter
$K_{*}=C_{u}K_{1}$, set $K_{2}':=K_{2}+C_{2}K_{1}$ and recall the set
\[
\mathcal D:=\Bigl\{h\ge0:\ \textstyle\int h\,dm=1,\
\norm h_{\W1}\le K_{1}\Bigr\}\supset\bar B
\]
of Theorem~\ref{thm:main}. The next lemma is the contraction estimate on which
the main theorem rests. We stress that its two densities in the lemma belong to two different sets. The first is required to lie in $\bar B$, since the proof applies
Lemma~\ref{lem:comp2} to it and therefore needs a bound on its $\W2$-norm, while
the second is only required to lie in the larger set $\mathcal D$. 
\begin{lemma}\label{lem:stab}
Let
\[
\eps_{*}:=\min\Bigl(\eps_{0}(K_{*}),\;
\frac{1-\gamma}{4\,\hat C\,K_{2}'\,C_{\mathrm{ml}}}\Bigr),
\qquad\rho:=\frac{1+\gamma}{2}\in(0,1).
\]
Then for every $\eps\in[0,\eps_{*})$, every $h\in\bar B$ and every
$\tilde h\in\mathcal D$ the following estimate holds:
\[
\norm{\Le^{n}(h)-\Le^{n}(\tilde h)}_{\W1}
\le2C_{\mathrm{ml}}\,\rho^{\,n}\,\norm{h-\tilde h}_{\W1},
\qquad n\ge0 .
\]
\end{lemma}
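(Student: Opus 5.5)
We write $h_{n}:=\Le^{n}(h)$, $\tilde h_{n}:=\Le^{n}(\tilde h)$ and $d_{n}:=\norm{h_{n}-\tilde h_{n}}_{\W1}$. The aim is to derive a recursive inequality of the form required by Lemma~\ref{lem:gronwall} and then invoke it with $a=C_{\mathrm{ml}}$ and $b=\hat C\eps K_{2}'C_{\mathrm{ml}}$.

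\textbf{Step 1: a splitting identity.} Since $h_{i}=\Lop{h_{i-1}}h_{i-1}$ and $\tilde h_{i}=\Lop{\tilde h_{i-1}}\tilde h_{i-1}$,
\[
h_{i}-\tilde h_{i}=\Lop{\tilde h_{i-1}}(h_{i-1}-\tilde h_{i-1})+(\Lop{h_{i-1}}-\Lop{\tilde h_{i-1}})h_{i-1}.
\]
Iterating this from $i=n$ down to $i=1$ gives
\[
h_{n}-\tilde h_{n}=\Lop{\tilde h_{n-1}}\cdots\Lop{\tilde h_{0}}(h-\tilde h)
+\sum_{i=1}^{n}\Lop{\tilde h_{n-1}}\cdots\Lop{\tilde h_{i}}\bigl(\Lop{h_{i-1}}-\Lop{\tilde h_{i-1}}\bigr)h_{i-1}.
\]

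\textbf{Step 2: the first term.} The difference $h-\tilde h$ has zero mean, and every $\tilde h_{j}$ lies in $B(K_{*})$ by Proposition~\ref{prop:inv}, since $\tilde h\in\mathcal D$. Because $\eps<\eps_{*}\le\eps_{0}(K_{*})$, Lemma~\ref{lem:memloss} bounds the first term by $C_{\mathrm{ml}}\gamma^{n}d_{0}$.

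\textbf{Step 3: the sum.} Each summand has zero mean, because both transfer operators preserve integrals. Lemma~\ref{lem:memloss} therefore applies again, now with $n-i$ operators, giving the factor $C_{\mathrm{ml}}\gamma^{n-i}$. Next, (A10) applied with $g=h_{i-1}$ gives
\[
\norm{(\Lop{h_{i-1}}-\Lop{\tilde h_{i-1}})h_{i-1}}_{\W1}\le\hat C\eps\, d_{i-1}\norm{h_{i-1}}_{\W2}.
\]
This is where we need $h\in\bar B$: by Proposition~\ref{prop:inv}, $h_{i-1}\in\bar B$, so $\norm{h_{i-1}}_{\W2}\le K_{2}\le K_{2}'$. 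The constant $K_{2}'=K_{2}+C_{2}K_{1}$ would also cover the route through Lemma~\ref{lem:comp2}. Both $h_{i-1}$ and $\tilde h_{i-1}$ are admissible densities in $\W1$, so (A10) is legitimately applicable.

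\textbf{Step 4: conclusion.} Combining Steps 2 and 3 yields
\[
d_{n}\le C_{\mathrm{ml}}\gamma^{n}d_{0}+\hat C\eps K_{2}'C_{\mathrm{ml}}\sum_{i=1}^{n}\gamma^{n-i}d_{i-1}.
\]
The choice of $\eps_{*}$ guarantees $b=\hat C\eps K_{2}'C_{\mathrm{ml}}\le(1-\gamma)/4$, so Lemma~\ref{lem:gronwall} gives $d_{n}\le 2C_{\mathrm{ml}}\rho^{n}d_{0}$. The case $n=0$ is trivial since $C_{\mathrm{ml}}\ge1$.

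\textbf{Main obstacle.} The delicate point is the bookkeeping of which trajectory supplies the $\W2$ bound. The iteration must be arranged so that the difference operator always acts on $h_{i-1}$, the iterate from $\bar B$, while the memory-loss products always use the $\tilde h_{j}$, which only need to lie in $B(K_{*})$. The splitting in Step 1 is chosen exactly so that this holds.
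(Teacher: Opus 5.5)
Your proof is correct, but it telescopes in the opposite order from the paper.

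\emph{The paper's route.} The paper writes $\Le^{n}(h)-\Le^{n}(\tilde h)=P^{n}_{\tilde h}(h-\tilde h)+(P^{n}_{h}-P^{n}_{\tilde h})h$. It expands $P^{n}_{h}-P^{n}_{\tilde h}$ with the $h$-trajectory operators in the prefix, so $\Lop{h^{(i-1)}}-\Lop{\tilde h^{(i-1)}}$ acts on the mixed function $g_{i}=\Lop{\tilde h^{(i-2)}}\cdots\Lop{\tilde h^{(0)}}h$. Since $g_{i}$ is not a point of either orbit, its $\W2$-norm has to be controlled by the non-autonomous bound of Lemma~\ref{lem:comp2}, which rests on Lemma~\ref{lem:unif1} and Proposition~\ref{prop:supbd}. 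This is where $K_{2}'=K_{2}+C_{2}K_{1}$ comes from. Memory loss is then applied to the prefix along the $h$-orbit.

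\emph{Your route.} By unrolling the one-step recursion, you put the $\tilde h$-orbit in every prefix, in both terms. The difference operator then acts on the genuine iterate $h_{i-1}$, whose $\W2$-norm is at most $K_{2}$ directly from $\Le(\bar B)\subseteq\bar B$ in Proposition~\ref{prop:inv}. This removes the need for Lemma~\ref{lem:comp2} in this step, and would even permit $K_{2}$ instead of $K_{2}'$ in the definition of $\eps_{*}$.

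Both routes produce the same recursive inequality and close identically with Lemma~\ref{lem:gronwall}. The one thing to temper is your closing claim that the iteration \emph{must} be arranged your way: the paper's arrangement also works, at the cost of the extra lemma.
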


\begin{proof}
Below we use the following notation
$h^{(j)}:=\Le^{j}(h)$, $\tilde h^{(j)}:=\Le^{j}(\tilde h)$ and
\begin{equation}\label{eq:traj}
P^{j}_{h}:=\Lop{h^{(j-1)}}\cdots\Lop{h^{(0)}},
\qquad\text{so that}\qquad \Le^{j}(h)=P^{j}_{h}h .
\end{equation}
and $d_{j}:=\norm{h^{(j)}-\tilde h^{(j)}}_{\W1}$. By
Proposition~\ref{prop:inv} we have $h^{(j)}\in\bar B\subset B(K_{*})$ for all
$j\ge0$, and, since $\norm{\tilde h}_{\W1}\le K_{1}$, also
$\tilde h^{(j)}\in B(K_{*})$ for all $j\ge0$. Write
\[
\Le^{n}(h)-\Le^{n}(\tilde h)
=P^{n}_{\tilde h}(h-\tilde h)+(P^{n}_{h}-P^{n}_{\tilde h})h .
\]
The function $h-\tilde h$ is mean-zero and the densities defining
$P^{n}_{\tilde h}$ lie in $B(K_{*})$, so Lemma~\ref{lem:memloss} gives $$\norm{P^{n}_{\tilde h}(h-\tilde h)}_{\W1}
\le C_{\mathrm{ml}}\gamma^{n}d_{0}.$$

For the second term we telescope:
\[
P^{n}_{h}-P^{n}_{\tilde h}
=\sum_{i=1}^{n}\Lop{h^{(n-1)}}\cdots\Lop{h^{(i)}}
\bigl(\Lop{h^{(i-1)}}-\Lop{\tilde h^{(i-1)}}\bigr)
\Lop{\tilde h^{(i-2)}}\cdots\Lop{\tilde h^{(0)}} .
\]
For $i>1$ and set
$g_{i}:=\Lop{\tilde h^{(i-2)}}\cdots\Lop{\tilde h^{(0)}}h$ and for $i=1$ we let $g_1 = h$. Here the hypothesis $h\in\bar B$ is used: since $h\in\bar B\subset\W2$ with
$\norm h_{\W2}\le K_{2}$ and $\norm h_{\W1}\le K_{1}$, Lemma~\ref{lem:comp2}
gives $\norm{g_{i}}_{\W2}\le K_{2}+C_{2}K_{1}=K_{2}'$. By (A10),
$$\norm{(\Lop{h^{(i-1)}}-\Lop{\tilde h^{(i-1)}})g_{i}}_{\W1}
\le\hat C\eps\,d_{i-1}K_{2}',$$ 
and this function is mean-zero, being a difference of two transfer operators
applied to the same function. The prefix consists of $n-i$ operators whose
densities lie in $\bar B\subset B(K_{*})$, so Lemma~\ref{lem:memloss} applies
again (including the case $n-i=0$):
\[
\norm{(P^{n}_{h}-P^{n}_{\tilde h})h}_{\W1}
\le\sum_{i=1}^{n}C_{\mathrm{ml}}\gamma^{\,n-i}\hat C\eps K_{2}'\,d_{i-1}.
\]
Hence $d_{n}\le a\gamma^{n}d_{0}+b\sum_{i=1}^{n}\gamma^{n-i}d_{i-1}$
with $a:=C_{\mathrm{ml}}\ge1$,
$b:=C_{\mathrm{ml}}\hat CK_{2}'\eps\le\frac{1-\gamma}{4}$ for $\eps<\eps_{*}$.
Lemma~\ref{lem:gronwall} now applies and finishes the proof. 
\end{proof}

\begin{remark}
Taking $n$ with $2C_{\mathrm{ml}}\rho^{n}<1$ shows that $\Le^{n}$ is a
contraction on $\bar B$ for the $\W{1}$-metric.
\end{remark}

\subsection{Proof of the main theorem}\label{ss:mainproof}
In this subsection we prove Theorem~\ref{thm:main}. 
\begin{proof}
\emph{Existence of invariant density.} Let $g_{n}:=\Le^{n}(\mathbf1)$; by
Proposition~\ref{prop:inv}, $g_{n}\in\bar B$ for all $n$. For
$n,k\in\mathbb N$ applying Lemma~\ref{lem:stab} with $h:=\Le^{k}(\mathbf1)
\in\bar B$ and $\tilde h:=\mathbf1\in\mathcal D$ yields 
\[
\norm{g_{n+k}-g_{n}}_{\W{1}}
=\norm{\Le^{n}(\Le^{k}\mathbf1)-\Le^{n}(\mathbf1)}_{\W{1}}
\le 2C_{\mathrm{ml}}\rho^{\,n}\norm{\Le^{k}\mathbf1-\mathbf1}_{\W{1}}
\le 4C_{\mathrm{ml}}K_{1}\rho^{\,n}.
\]
Hence $(g_{n})$ is Cauchy in the Banach space $\W{1}$. Therefore, it converges to
some $h_{\varepsilon}\in\W{1}$ with
$\norm{h_{\varepsilon}}_{\W{1}}\le K_{1}$. Convergence in $\W{1}$ implies
convergence in $L^{1}$, so $h_{\varepsilon}\ge0$ a.e.\ and
$\int h_{\varepsilon}\,dm=1$. Moreover, since $\norm{g_{n}}_{1,1/p}\le K'$ for every $n$ and the ball
$\{\norm{\cdot}_{1,1/p}\le K'\}$ is closed in $L^{1}$ by Lemma~\ref{lem:lsc}, we
also have $\norm{h_{\varepsilon}}_{1,1/p}\le K'$. In particular
$h_{\varepsilon}\in\mathcal D$. We claim that this limit is a fixed point of the
nonlinear operator $\Le$. Indeed, we have
$$\norm{h_{\varepsilon}-\Le(h_{\varepsilon})}_{\W{1}}
\le\norm{h_{\varepsilon}-g_{n+1}}_{\W{1}}
+\norm{\Le(g_{n})-\Le(h_{\varepsilon})}_{\W{1}}.$$ 
For the second term, decompose
\[
\Le(g_{n})-\Le(h_{\varepsilon})
=\Lop{h_{\varepsilon}}(g_{n}-h_{\varepsilon})
+\bigl(\Lop{g_{n}}-\Lop{h_{\varepsilon}}\bigr)g_{n}.
\]
By \eqref{eq:s1} and \eqref{eq:embed} we have 
$$\norm{\Lop{h_{\varepsilon}}(g_{n}-h_{\varepsilon})}_{\W{1}}
\le(\lambda+M)\norm{g_{n}-h_{\varepsilon}}_{\W{1}}.$$ 
Next we apply (A10) with
$g:=g_{n}\in\bar B$ (so $\norm{g_{n}}_{\W{2}}\le K_{2}$) and 
$$\norm{(\Lop{g_{n}}-\Lop{h_{\varepsilon}})g_{n}}_{\W{1}}
\le\hat C\varepsilon K_{2}\norm{g_{n}-h_{\varepsilon}}_{\W{1}}.$$ Both terms
tend to $0$ as $n\to\infty$, therefore,
$\Le(h_{\varepsilon})=h_{\varepsilon}$. This proves the existence part of item (i).  

Note that the $\W{2}$-weight in (A10) was placed on $g_{n}$, whose $\W{2}$-norm
is controlled by $K_{2}$. 

\emph{(ii) Exponential convergence.} Let $h\in\mathcal D$. We apply
Lemma~\ref{lem:stab} with first argument $\mathbf 1\in\bar B$ and second argument
$h\in\mathcal D$ and obtain 
\[\begin{aligned}
\norm{\Le^{n}(h)-h_{\varepsilon}}_{\W{1}}
& \le\norm{\Le^{n}(h)-g_{n}}_{\W{1}}+\norm{g_{n}-h_{\varepsilon}}_{\W{1}}\\
&\le 2C_{\mathrm{ml}}\rho^{n}\norm{h-\mathbf1}_{\W{1}}
+4C_{\mathrm{ml}}K_{1}\rho^{n}
\le C\rho^{n},
\end{aligned}\]
with $C:=2C_{\mathrm{ml}}(K_{1}+1)+4C_{\mathrm{ml}}K_{1}$, where we have used
$$\norm{g_{n}-h_{\varepsilon}}_{\W{1}}\le4C_{\mathrm{ml}}K_{1}\rho^{n}.$$

\emph{Uniqueness.} If $\tilde h\in\mathcal D$ satisfies
$\Le(\tilde h)=\tilde h$, then $\Le^{n}(\tilde h)=\tilde h$ for all
$n$, and (ii) implies that 
$$\norm{\tilde h-h_{\varepsilon}}_{\W{1}}\le C\rho^{n}\to0,$$ so
$\tilde h=h_{\varepsilon}$.
\end{proof}

%\appendix
\section{Appendix}
In this section we gather results concerning the Lasota - Yorke type estimates and some inequalities for BV type spaces. 
\subsection{Spaces of functions of generalised bounded variation, and the uniform sup-norm bound}\label{ss:keller}\label{app:A}

Let $S_{\rho}(x):=\{y\in I:\abs{x-y}<\rho\}$. Following \cite{Ke85}, for
$h:I\to\mathbb R$, we define the following
$$\operatorname{osc}(h,\rho,x):=
\operatorname*{ess\,sup}\{\abs{h(y_{1})-h(y_{2})}:y_{1},y_{2}\in
S_{\rho}(x)\},$$
and 
$$\operatorname{osc}_{1}(h,\rho):=\norm{\operatorname{osc}(h,\rho,\cdot)}_{1}.$$
Fix $\rho_{0}>0$. For $p\ge1$ let $\bv\subset L^{1}$ be the
Banach space with norm
$$\norm h_{1,1/p}=\operatorname{var}_{1,1/p}(h)+\norm h_{1}, \text{ where }
\operatorname{var}_{1,1/p}(h)
=\sup_{0<\rho\le\rho_{0}}\operatorname{osc}_{1}(h,\rho)/\rho^{1/p}.$$
It is known that the unit ball of $\bv$ is relatively compact in $L^{1}$ (see
\cite{Ke85}). Moreover the inclusion $\W1\hookrightarrow\bv$ is continuous, i.e.,
for $f\in\W1$,
$$\operatorname{osc}(f,\rho,x)\le\int_{S_{\rho}(x)}\abs{f'}\,dm,$$
hence $\operatorname{osc}_{1}(f,\rho)\le2\rho\norm{f'}_{1}$ and
$\operatorname{var}_{1,1/p}(f)\le2\rho_{0}^{1-1/p}\norm{f'}_{1}$.
Recall that $p=2$ is fixed throughout the paper; we keep the notation $1/p$ in
order to display the dependence of the exponent window on $p$.

The following elementary fact is the only closedness property used in the proof of
Theorem~\ref{thm:main}. We isolate it because relative compactness of the unit
ball, as stated in \cite{Ke85}, does not by itself imply that balls are closed.

\begin{lemma}\label{lem:lsc}
The seminorm $\operatorname{var}_{1,1/p}$ is lower semi-continuous with respect to
convergence in $L^{1}$. That is,  if $f_{n}\to f$ in $L^{1}$, then
$\operatorname{var}_{1,1/p}(f)\le\liminf_{n\to\infty}\operatorname{var}_{1,1/p}(f_{n})$.
Consequently, for every $R>0$ the ball $\{g\in L^{1}:\norm g_{1,1/p}\le R\}$ is
closed in $L^{1}$.
\end{lemma}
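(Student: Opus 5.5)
The plan is to prove lower semi-continuity pointwise in the scale $\rho$ and then take suprema. The closedness of balls will follow as a formal consequence.

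\emph{Reduction to a fixed $\rho$.} Fix $\rho\in(0,\rho_{0}]$. It suffices to show
\[
\operatorname{osc}_{1}(f,\rho)\le\liminf_{n\to\infty}\operatorname{osc}_{1}(f_{n},\rho).
\]
Once this holds, dividing by $\rho^{1/p}$ and bounding the right-hand side by $\liminf_{n}\operatorname{var}_{1,1/p}(f_{n})$ gives a bound independent of $\rho$. Taking the supremum over $\rho$ then yields the claim.

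\emph{The fixed-scale estimate.} First pass to a subsequence realising the $\liminf$. Then pass to a further subsequence with $f_{n}\to f$ almost everywhere; this is possible because $f_{n}\to f$ in $L^{1}$.

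Fix $x$, and let $N$ be the null set off which $f_{n}(y)\to f(y)$. For $y_{1},y_{2}\in S_{\rho}(x)\setminus N$ we have
\[
\abs{f(y_{1})-f(y_{2})}=\lim_{n}\abs{f_{n}(y_{1})-f_{n}(y_{2})}\le\liminf_{n}\operatorname{osc}(f_{n},\rho,x),
\]
at least when $y_{1},y_{2}$ avoid the null sets on which the essential suprema defining $\operatorname{osc}(f_{n},\rho,x)$ fail. There are countably many such exceptional sets, and their union is still null. Taking the essential supremum over $y_{1},y_{2}$ gives
\[
\operatorname{osc}(f,\rho,x)\le\liminf_{n}\operatorname{osc}(f_{n},\rho,x)\quad\text{for every }x.
\]
Fatou's lemma in $x$ then gives the fixed-$\rho$ estimate.

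\emph{Main obstacle.} The delicate point is the interpretation of the essential supremum over pairs. I will read $\operatorname{osc}(h,\rho,x)$ as $\operatorname{ess\,sup}_{S_{\rho}(x)}h-\operatorname{ess\,inf}_{S_{\rho}(x)}h$, which is the same thing as the essential supremum over pairs with respect to $m\otimes m$. Under this reading the argument is clean: off a null set we have $f_{n}\le\operatorname{ess\,sup}_{S_{\rho}(x)}f_{n}$ for all $n$ simultaneously. Passing to the limit gives
\[
f\le\liminf_{n}\operatorname{ess\,sup}_{S_{\rho}(x)}f_{n}\quad\text{a.e. on }S_{\rho}(x),
\]
and the analogous inequality holds for the infimum. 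Subtracting gives the pointwise inequality. Measurability of $x\mapsto\operatorname{osc}(h,\rho,x)$ is needed for Fatou's lemma; it holds because this function is lower semicontinuous in $x$, since $S_{\rho}(x)$ is open. If the $\liminf$ is infinite, the estimate is trivial.

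\emph{Closedness.} Let $g_{n}$ lie in the ball of radius $R$ and converge to $g$ in $L^{1}$. Then $\norm{g_{n}}_{1}\to\norm{g}_{1}$. Combining this with lower semi-continuity of the seminorm,
\[
\norm{g}_{1,1/p}\le\liminf_{n}\bigl(\operatorname{var}_{1,1/p}(g_{n})+\norm{g_{n}}_{1}\bigr)\le R .
\]
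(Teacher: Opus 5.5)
Your proof is correct and follows the paper's argument essentially step for step: fix $\rho$, pass to a subsequence realising the $\liminf$ and converging a.e., derive the pointwise bound $\operatorname{osc}(f,\rho,x)\le\liminf_{n}\operatorname{osc}(f_{n},\rho,x)$, apply Fatou, take the supremum over $\rho$, and use continuity of $\norm{\cdot}_{1}$ to get closedness of the balls. Your additional points — reading the pairwise essential supremum as $\operatorname{ess\,sup}-\operatorname{ess\,inf}$, and noting that $x\mapsto\operatorname{osc}(h,\rho,x)$ is lower semicontinuous and hence measurable, as Fatou's lemma requires — are sound and fill in details the paper leaves implicit.
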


\begin{proof}
Fix $\rho\in(0,\rho_{0}]$. Passing  to a subsequence if necessary, we obtain a sequence $f_{n}$  along which
$\operatorname{osc}_{1}(f_{n},\rho)$ converges to its lower limit and
$f_{n}\to f$ almost everywhere. For almost every $x$ and almost every pair
$y_{1},y_{2}\in S_{\rho}(x)$,
\[
\abs{f(y_{1})-f(y_{2})}=\lim_{n\to\infty}\abs{f_{n}(y_{1})-f_{n}(y_{2})}
\le\liminf_{n\to\infty}\operatorname{osc}(f_{n},\rho,x),
\]
so that $\operatorname{osc}(f,\rho,x)\le\liminf_{n}\operatorname{osc}(f_{n},\rho,x)$
for almost every $x$. Fatou's lemma then gives
$$\operatorname{osc}_{1}(f,\rho)\le\liminf_{n}\operatorname{osc}_{1}(f_{n},\rho).$$
Dividing by $\rho^{1/p}$ shows that $f\mapsto\operatorname{osc}_{1}(f,\rho)/\rho^{1/p}$
is lower semi-continuous on $L^{1}$ for each fixed $\rho$. Since  a supremum of lower
semi-continuous functions is lower semi-continuous, taking the supremum over
$\rho\in(0,\rho_{0}]$ proves the first assertion. Since $\norm\cdot_{1}$ is
continuous on $L^{1}$, the norm $\norm\cdot_{1,1/p}$ is lower semi-continuous on
$L^{1}$, and therefore its sublevel sets are closed.
\end{proof}

\begin{proposition}\label{prop:LYBVapp}
Under the assumptions {\rm(A1)--(A8)} there are $\vartheta\in(0,1)$ and
$C_{\mathrm{ap}}>0$, uniform in $(\eps,h)$, such that
$$\norm{\Lop hf}_{1,1/p}\le\vartheta\norm f_{1,1/p}+C_{\mathrm{ap}}\norm f_{1}$$
for all $f\in\bv$.
\end{proposition}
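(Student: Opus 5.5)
The plan is to apply Keller's general Lasota--Yorke theorem for $\bv$ \cite[Theorem 3.2]{Ke85} to each $T_{\eps,h}$, while tracking every constant so that it depends only on the data in (A1)--(A8). Keller's theorem applies to piecewise monotone maps whose inverse derivative $w=1/\abs{T'}$ (extended by $0$ off the partition) lies in $\mathrm{UBV}_{p}$ and satisfies $\norm{w}_\infty<1$. For such maps it yields
\[
\operatorname{var}_{1,1/p}(\LT f)\le C_{K}\bigl(\norm{w}_{\infty}\bigr)^{1/p'}\ldots\operatorname{var}_{1,1/p}(f)+D\norm f_{1},
\]
or more precisely a contraction factor of the form $c\,\norm{w}_\infty^{\,1-1/p}$ plus terms controlled by $\operatorname{var}_{p}(w)$. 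The constants depend only on $\norm{w}_\infty$, on the $p$-variation of $w$, on the number of branches and on the minimal branch length. So the first step is to check, uniformly in $(\eps,h)$, that $\norm{w_{T_{\eps,h}}}_\infty\le\theta^{-1}<1$, which follows from (A4), and that $\operatorname{var}_{p}(w_{T_{\eps,h}})$, the $p$-variation with $p=2$, is bounded by a constant depending only on $k$, $\theta$, $c_j$, $\beta_j$, $C_\pm$. This is the content of Lemma~\ref{lem:UBV}, already invoked in Proposition~\ref{prop:gap} for $T_0$. I would re-prove it in uniform form using (A8).

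For the uniform $\mathrm{UBV}_2$ bound, I would split $[0,1]$ into $V$ and the neighbourhoods $U_j$. On $V$, $w$ is $C^1$ with $\abs{w'}=\abs{T''}/\abs{T'}^2\le C_+\theta^{-2}$ by (A8)(ii) and (A4), so its variation there is uniformly bounded. On $U_j\setminus\{c_j\}$, (A8)(i) gives
\[
\abs{w'(x)}\le \frac{C_+\abs{x-c_j}^{\beta_j-1}}{C_-^2\abs{x-c_j}^{2\beta_j}}=C\abs{x-c_j}^{-\beta_j-1}.
\]
Since $-\beta_j-1>-1$, this is integrable. Hence $w$ has uniformly bounded total variation on each side of $c_j$, and $w(x)\to0$ as $x\to c_j$. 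The jumps at $c_j$, at $0,1$, and from extending $w$ by zero are bounded by $\theta^{-1}$ and are finite in number ($\le 2(k+1)$). Bounded variation implies bounded $p$-variation for every $p\ge1$, so $w\in\mathrm{UBV}_2$ with a uniform bound. At this level the cusp is harmless, because $\abs{T'}\to\infty$ only makes $w$ small. The partition is fixed and has $k+1$ branches of length at least $3r_0$, so Keller's geometric constants are uniform as well.

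The main obstacle is obtaining a contraction factor $\vartheta<1$ and not merely a bound. Keller's first-step constant is typically of the form $c\cdot\norm{w}_\infty^{1-1/p}$ times a combinatorial factor, which need not be below $1$. To handle this, I would pass to an iterate: $\LT^N=\mathcal L_{T^N}$, with $T^N$ a composition $T_{\eps,h_N}\circ\cdots\circ T_{\eps,h_1}$. In our application only a single fixed $h$ is needed, since $\Le(g)=\Lop{g}g$. The weight for $T^N$ is $\prod w\circ T^i$, which has sup-norm at most $\theta^{-N}$ and uniformly controlled $p$-variation by the product rule for $\mathrm{UBV}$ (with branch count $(k+1)^N$ and a minimal branch length bounded below uniformly, via (A8)(i)–(ii) bounding $\abs{T'}$ from above away from cusps). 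This gives a factor $\vartheta_N<1/2$ for large $N$.

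To return from the iterate to a single step, I would either redefine the norm equivalently, $\norm f^{\ast}:=\sum_{i<N}\vartheta_0^{-i}\norm{\LT^i f}_{1,1/p}$, or check directly that Keller's one-step estimate with $\norm{w}_\infty\le\theta^{-1}$ already yields a factor below one for the chosen $\rho_0$ small; I would try the latter first. Uniformity in $(\eps,h)$ then follows because every constant was expressed through $k$, $\theta$, $r_0$, $\beta_j$ and $C_\pm$.
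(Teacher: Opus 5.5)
Your uniform $\mathrm{UBV}_2$ bound for $w=1/\abs{T'}$ is fine. The integrability of $\abs{w'}\le C\abs{x-c_j}^{-\beta_j-1}$ even gives a uniform BV bound, more directly than the H\"older route of Lemma~\ref{lem:UBV}. The gap is that you never establish what the proposition asserts: a factor $\vartheta<1$ after \emph{one} step, in the \emph{fixed} norm $\norm{\cdot}_{1,1/p}$, uniformly in $(\eps,h)$. You call this the main obstacle and then leave it open (``I would try the latter first''), and the fallback does not prove the statement.

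The adapted norm $\sum_{i<N}\vartheta_0^{-i}\norm{\LT^{i}f}_{1,1/p}$ is built from $T=T_{\eps,h}$, so it changes with $h$. An inequality for it is not the stated inequality, and it does not serve the later uses. Your remark that only a single fixed $h$ is needed is wrong. Proposition~\ref{prop:supbd} iterates $\Lop{h_n}\cdots\Lop{h_1}$ with varying densities. Proposition~\ref{prop:inv} needs $\norm{\Lop{g}g}_{1,1/p}\le\vartheta K'+C_{\mathrm{ap}}$ for every $g\in\bar B$, i.e.\ invariance of an $h$-independent ball. The iterate route also relies on a uniform lower bound for the branch lengths of $T_{\eps,h_N}\circ\cdots\circ T_{\eps,h_1}$, which (A1)--(A8) do not give. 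Bounding $\abs{T'}$ away from the cusps does not stop a singular value $a_{j,\eps,h}$ from lying arbitrarily close to some $c_l$, and that produces arbitrarily short branches of the composition.

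The missing idea is that the cusps make a single step suffice, with no combinatorial factor.
\begin{itemize}
\item Every endpoint of $J_i=T(I_i)$ interior to $(0,1)$ is a singular value, and $w_i\to0$ there by (A6). So the zero-extensions of $(f\circ g_i)w_i$ have no interior jumps, and no boundary term enters the leading constant. The endpoints $0,1$ go into the $\norm f_1$ term via Keller's trace estimates.
\item Branch by branch, $\abs{g_i'}\le\theta^{-1}$ gives $\osc(f\circ g_i,\rho,z)\le\osc(f,\rho/\theta,g_i(z))$. Keller's product estimate (this is where your $\mathrm{UBV}_2$ bound on the weights enters) leaves the leading term $\int_{J_i}\osc(f\circ g_i,\rho,z)\,w_i(z)\,dz$.
\item The change of variables $y=g_i(z)$, $dy=w_i(z)\,dz$ turns this into $\int_{I_i}\osc(f,\rho/\theta,y)\,dy$. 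Summing over $i$, these reassemble into the single integral $\osc_1(f,\rho/\theta)$ over $[0,1]$, as in \eqref{eq:nobranchcount}. So the branch count never multiplies the leading constant.
\item Finally, $\osc_1(f,\rho/\theta)\le(\rho/\theta)^{1/2}\var_{1,1/2}(f)$ gives $\vartheta=\theta^{-1/2}$ directly, in the original norm and uniformly in $(\eps,h)$.
\end{itemize}
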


One may take $\vartheta=\theta^{-1/2}$. We give the proof for completeness. 

We write $w_{T}:=1/\abs{T'}$ throughout the proof with the convention
$w_{T}(c_{j}):=0$, and recall that $p=2$, so that
$\var_{1,1/2}(f)=\sup_{0<\rho\le\rho_{0}}\osc_{1}(f,\rho)\rho^{-1/2}$. Further, let $g_{i}:=(T|_{I_{i}})^{-1}:J_{i}\to I_{i}$ denote the inverse branches, where
$J_{i}:=T(I_{i})$, and write
\[
\Lop hf=\sum_{i=0}^{k}(f\circ g_{i})\,w_{i}\,\mathbf 1_{J_{i}},
\qquad w_{i}:=\abs{g_{i}'}=\frac{1}{\abs{T'\circ g_{i}}}.
\]

\begin{lemma}
\label{lem:UBV}
Assume {\rm(A1)--(A8)} and set
\[
\alpha:=-\max_{1\le j\le k}\beta_{j}\ \in\ \bigl(\tfrac34,1\bigr).
\]
There is $C_{\mathrm H}<\infty$, independent of $\eps$ and $h$, such that
\[
\abs{w_{T}(x)-w_{T}(y)}\le C_{\mathrm H}\abs{x-y}^{\alpha},
\qquad x,y\in[0,1].
\]
Consequently $\var_{2}(w_{T})\le C_{\mathrm H}$ uniformly in $(\eps,h)$, and the weights $w_i$ lie in $\bv$ with uniformly bounded seminorm.
\end{lemma}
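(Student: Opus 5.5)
The plan is to prove a uniform Hölder estimate for $w_T=1/\abs{T'}$ on $[0,1]$ by treating separately the region $V$ away from the singularities and the neighbourhoods $U_j$. The consequences then follow quickly. A Hölder function of exponent $\alpha$ has $\osc(w_T,\rho,x)\le C_{\mathrm H}(2\rho)^{\alpha}$, and $\alpha>3/4>1/2$, so the oscillation seminorms are controlled. For the weights $w_i=w_T\circ g_i$ on $J_i$ we would use that $g_i$ is a contraction ($\abs{g_i'}\le\theta^{-1}$), so $w_i$ is again $\alpha$-Hölder on $J_i$. The jumps at the two endpoints of $J_i$ contribute at most $2\norm{w_T}_\infty\cdot 2\rho\le 4\theta^{-1}\rho$ to $\osc_1$. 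Hence $\var_{1,1/2}(w_i\mathbf 1_{J_i})$ is bounded uniformly, and the same argument gives the bound on $\var_2(w_T)$ in Keller's sense.

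\emph{Step 1 (away from singularities).} On $V$ and on any interval avoiding the $c_j$, we have $w_T'=-T''\operatorname{sgn}(T')/(T')^2$. By (A4) and (A8)(ii), $\abs{w_T'}\le C_+\theta^{-2}$. So $w_T$ is Lipschitz there, and hence $\alpha$-Hölder since $\abs{x-y}\le1$.

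\emph{Step 2 (near $c_j$).} For $x\in U_j\setminus\{c_j\}$, (A8)(i) gives $w_T(x)\le C_-^{-1}\abs{x-c_j}^{-\beta_j}$, which tends to $0$, consistent with the convention $w_T(c_j)=0$. It also gives
\[
\abs{w_T'(x)}\le \frac{C_+\abs{x-c_j}^{\beta_j-1}}{C_-^{2}\abs{x-c_j}^{2\beta_j}}=C_+C_-^{-2}\abs{x-c_j}^{-\beta_j-1}.
\]
Since $-\beta_j-1\in(-1/4,0)$, this derivative is integrable. We then split according to the positions of $x$ and $y$:
\begin{itemize}
\item If $x$ and $y$ lie on the same side of $c_j$, integrating $\abs{w_T'}$ between them and using the concavity inequality $\abs{s^{a}-t^{a}}\le\abs{s-t}^{a}$ for $a=-\beta_j\in(0,1)$ yields $\abs{w_T(x)-w_T(y)}\le C\abs{x-y}^{-\beta_j}$.
\item If $x$ and $y$ lie on opposite sides of $c_j$ (or one of them equals $c_j$), we bound $\abs{w_T(x)-w_T(y)}\le w_T(x)+w_T(y)\le 2C_-^{-1}\abs{x-y}^{-\beta_j}$.
\end{itemize}
In both cases $-\beta_j\ge\alpha$ and $\abs{x-y}<1$, so $\abs{x-y}^{-\beta_j}\le\abs{x-y}^{\alpha}$.

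\emph{Step 3 (gluing).} For general $x<y$, we insert the at most $2k+2$ boundary points of the sets $U_j$ and the singular points lying between $x$ and $y$. We then apply Steps 1–2 on each subinterval and use the triangle inequality, noting that each piece has length at most $\abs{x-y}$. Alternatively, if $\abs{x-y}\ge r_0$, we can simply bound the difference by $2\theta^{-1}\le 2\theta^{-1}r_0^{-\alpha}\abs{x-y}^\alpha$. All the constants involved depend only on $C_\pm$, $\theta$, $r_0$ and $k$, which gives the required uniformity in $(\eps,h)$.

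The main obstacle is Step 2, in particular handling pairs $x,y$ straddling $c_j$ and extracting the exponent $-\beta_j$ rather than a Lipschitz bound. This is resolved by the cusp decay of $w_T$, namely $w_T(x)\lesssim\abs{x-c_j}^{-\beta_j}$, combined with the subadditivity of $t\mapsto t^{a}$.
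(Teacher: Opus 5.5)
Your H\"older estimate follows the paper's proof essentially step by step. Both arguments use Lipschitz control of $w_T$ on the components of $V$, and integrate the bound $\abs{w_T'(x)}\lesssim\abs{x-c_j}^{-\beta_j-1}$ on $U_j$ with the concavity inequality for points on the same side of $c_j$. For points on opposite sides both use the decay of $w_T$ at $c_j$, and both glue at the endpoints of the $U_j$. The differences are cosmetic. You read off $w_T(x)\le C_-^{-1}\abs{x-c_j}^{-\beta_j}$ directly from the lower bound in (A8)(i), whereas the paper lets $y\to c_j$ in the same-side estimate. You also glue with the plain triangle inequality rather than the power-mean inequality. One slip: the bound $\abs{w_T'}\le C_+\theta^{-2}$ of your Step 1 holds only on $V$, not on every interval avoiding the $c_j$, because (A8)(ii) is stated on $V$. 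You only use it on components of $V$, so nothing breaks.

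For the consequences you argue differently on the $\bv$ claim. You estimate $\osc_1(w_i\mathbf 1_{J_i},\rho)$ directly: $w_i=w_T\circ g_i$ is $\alpha$-H\"older because $g_i$ is a $\theta^{-1}$-contraction, and the two endpoint jumps cost $O(\rho)$. Since $\alpha>\tfrac12$, this bounds $\var_{1,1/2}(w_i\mathbf 1_{J_i})$ uniformly. That is correct and self-contained. It also handles the composition with $g_i$ and the truncation to $J_i$ explicitly, which the paper leaves to Keller's Lemma~2.7.

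However, the claim $\var_2(w_T)\le C_{\mathrm H}$ does not follow from ``the same argument''. Here $\var_2$ is the partition $2$-variation $\sup\bigl(\sum_i\abs{w_T(x_i)-w_T(x_{i-1})}^2\bigr)^{1/2}$, which your $L^1$-oscillation computation does not address. The missing line is the one the paper uses. By the H\"older bound and superadditivity of $t\mapsto t^{2\alpha}$ (valid since $2\alpha>1$),
\[
\sum_i\abs{w_T(x_i)-w_T(x_{i-1})}^2\le C_{\mathrm H}^2\sum_i\abs{x_i-x_{i-1}}^{2\alpha}\le C_{\mathrm H}^2\Bigl(\sum_i\abs{x_i-x_{i-1}}\Bigr)^{2\alpha}=C_{\mathrm H}^2 .
\]
You should add it, because this $\mathrm{UBV}_2$-type bound is what the paper feeds into Keller's spectral theorems for $\mathcal L_{T_0}$.
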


\begin{proof}
Write $\alpha_{j}:=-\beta_{j}$, so that $\alpha_{j}\ge\alpha>\tfrac34$ for every
$j$ and $\alpha_{j}<1$.

On $V$ the bounds \eqref{eq:A8far} give
$\abs{w_{T}'}=\abs{T''}/\abs{T'}^{2}\le C\theta^{-2}$, so $w_{T}$ is uniformly
Lipschitz, hence uniformly $\alpha$-H\"older, on each component of $V$.

Fix $j$ and let $x,y\in U_{j}$. By \eqref{eq:A8app},
\[
\abs{w_{T}'(x)}=\frac{\abs{T''(x)}}{\abs{T'(x)}^{2}}
\le\frac{C\abs{x-c_{j}}^{\beta_{j}-1}}{C^{-2}\abs{x-c_{j}}^{2\beta_{j}}}
=C^{3}\abs{x-c_{j}}^{-\beta_{j}-1}
=C^{3}\abs{x-c_{j}}^{\alpha_{j}-1}.
\]
If $x$ and $y$ lie on the same side of $c_{j}$, say
$0<\abs{x-c_{j}}\le\abs{y-c_{j}}$, then integrating and using the concavity of
$t\mapsto t^{\alpha_{j}}$ on $[0,\infty)$, which gives
$b^{\alpha_{j}}-a^{\alpha_{j}}\le(b-a)^{\alpha_{j}}$ for $0\le a\le b$. Since $\abs{x-y}\le1$ and $\alpha_{j}\ge\alpha$ we have 
\[
\abs{w_{T}(x)-w_{T}(y)}
\le C^{3}\int_{\abs{x-c_{j}}}^{\abs{y-c_{j}}}t^{\alpha_{j}-1}\,dt
\le\frac{C^{3}}{\alpha_{j}}\abs{x-y}^{\alpha_{j}}
\le\frac{C^{3}}{\alpha}\abs{x-y}^{\alpha}.
\]
 Letting
$y\to c_{j}$ and using $w_{T}(c_{j})=0$, which is consistent with (A6), the same
bound gives $\abs{w_{T}(x)}\le\frac{C^{3}}{\alpha}\abs{x-c_{j}}^{\alpha}$.  Hence
if $x$ and $y$ lie on opposite sides of $c_{j}$,
\[
\abs{w_{T}(x)-w_{T}(y)}\le\abs{w_{T}(x)}+\abs{w_{T}(y)}
\le\tfrac{C^{3}}{\alpha}\bigl(\abs{x-c_{j}}^{\alpha}+\abs{y-c_{j}}^{\alpha}\bigr)
\le\tfrac{2C^{3}}{\alpha}\abs{x-y}^{\alpha}.
\]
For general $x<y$ in $[0,1]$, split $[x,y]$ at the endpoints of the $U_{j}$ it
meets. This produces at most $2k+1$ subintervals, on each of which the above
applies, so by the power-mean inequality
\[
\abs{w_{T}(x)-w_{T}(y)}\le C'\sum_{i}\abs{x_{i}-x_{i-1}}^{\alpha}
\le C'(2k+1)^{1-\alpha}\abs{x-y}^{\alpha}.
\]
This proves the H\"older bound with a constant $C_{\mathrm H}$ depending only on
$C$, $\theta$, $\alpha$ and $k$.

For the $2$-variation, let $0=x_{0}<\cdots<x_{N}=1$ be any partition. Then
\[
\sum_{i=1}^{N}\abs{w_{T}(x_{i})-w_{T}(x_{i-1})}^{2}
\le C_{\mathrm H}^{2}\sum_{i=1}^{N}\abs{x_{i}-x_{i-1}}^{2\alpha}
\le C_{\mathrm H}^{2}\Bigl(\sum_{i=1}^{N}\abs{x_{i}-x_{i-1}}\Bigr)^{2\alpha}
=C_{\mathrm H}^{2},
\]
the middle step by superadditivity of $t\mapsto t^{2\alpha}$, valid since
$2\alpha>\tfrac32>1$. Taking the supremum over partitions gives
$\var_{2}(w_{T})\le C_{\mathrm H}$. The last statement of the Lemma follows from \cite[Lemma 2.7]{Ke85}
\end{proof}

\begin{proof}[Proof of Proposition~\ref{prop:LYBVapp}]
By Lemma~\ref{lem:UBV} the weights $w_{i}$ in $\Lop hf$ have uniformly bounded
$\mathrm{UBV}_{2}$ norm. Every endpoint of $J_{i}$ lying in the interior of
$[0,1]$ is the image of a singular point $c_{j}$, by (A2), and at such a point
$w_{i}\to0$ by (A6); hence the extension of $w_{i}\mathbf 1_{J_{i}}$ by zero has
no interior jump, and the endpoint contributions at $0$ and $1$ are absorbed in
the lower-order term by the trace estimates \cite[Lemmas 2.1--2.2]{Ke85}.

Fix $\rho\in(0,\rho_{0}]$. Since $\abs{g_{i}'}\le\theta^{-1}$,
\cite[Lemma 2.3]{Ke85} gives
$\osc(f\circ g_{i},\rho,z)\le\osc(f,\rho/\theta,g_{i}(z))$ for a.e.\ $z\in J_{i}$,
and \cite[Lemma 2.4]{Ke85} gives
\[
\osc_{1}\bigl((f\circ g_{i})w_{i}\mathbf 1_{J_{i}},\rho\bigr)
\le\int_{J_{i}}\osc\bigl(f\circ g_{i},\rho,z\bigr)w_{i}(z)\,dz
+C_{i}\rho^{1/2}\norm f_{1}.
\]
Changing variables $y=g_{i}(z)$, so that $dy=w_{i}(z)\,dz$, the integral is at
most $\int_{I_{i}}\osc(f,\rho/\theta,y)\,dy$. Summing over the branches and using
that the $I_{i}$ partition $[0,1]$,
\begin{equation}\label{eq:nobranchcount}
\sum_{i=0}^{k}\int_{J_{i}}\osc\bigl(f\circ g_{i},\rho,z\bigr)w_{i}(z)\,dz
\le\int_{0}^{1}\osc(f,\rho/\theta,y)\,dy=\osc_{1}(f,\rho/\theta).
\end{equation}
Notice that after the change of variables the leading terms
reassemble into a single integral over $[0,1]$, so the number $k+1$ of branches
does \emph{not} multiply the leading constant, and enters only through
$C_{\mathrm{ap}}:=\sum_{i}C_{i}$. Since $\theta>1$ we have
$\rho/\theta<\rho\le\rho_{0}$, so
$\osc_{1}(f,\rho/\theta)\le(\rho/\theta)^{1/2}\var_{1,1/2}(f)$ and therefore
\[
\frac{\osc_{1}(\Lop hf,\rho)}{\rho^{1/2}}
\le\theta^{-1/2}\var_{1,1/2}(f)+C_{\mathrm{ap}}\norm f_{1}.
\]
Taking the supremum over $\rho\in(0,\rho_{0}]$ and adding
$\norm{\Lop hf}_{1}\le\norm f_{1}$ gives the assertion with
$\vartheta=\theta^{-1/2}$, after enlarging $C_{\mathrm{ap}}$ by $1$.
\end{proof}

\begin{proposition}\label{prop:supbd}
For any $f\in\bv$, $n\in\mathbb N$ and admissible
$h_{1},\dots,h_{n}$,
$$\norm{\Lop{h_{n}}\cdots\Lop{h_{1}}f}_{\infty}\le\hat M\norm f_{\infty}$$
with $\hat M$ uniform.
\end{proposition}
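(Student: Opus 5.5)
The plan is to use positivity of the transfer operators to reduce the statement to a bound on the single function $\mathbf 1$. That bound will then follow from the uniform Lasota--Yorke inequality on $\bv$ (Proposition~\ref{prop:LYBVapp}) together with the embedding $\bv\hookrightarrow L^{\infty}$.

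\emph{Step 1 (reduction to $\mathbf 1$).} Write $P_n:=\Lop{h_{n}}\cdots\Lop{h_{1}}$. Each $\Lop{h_i}$ is a positive operator: by the branch formula $\Lop hf=\sum_i (f\circ g_i)w_i\mathbf 1_{J_i}$ with $w_i\ge0$ it preserves non-negativity. Hence $P_n$ is positive, and $-\abs f\le f\le \abs f$ gives $\abs{P_nf}\le P_n\abs f$ a.e. Since also $\abs f\le\norm f_\infty\mathbf 1$ a.e., we obtain
\[
\abs{P_nf}\le \norm f_\infty\,P_n\mathbf 1\quad\text{a.e.},
\qquad\text{so}\qquad
\norm{P_nf}_\infty\le\norm f_\infty\norm{P_n\mathbf 1}_\infty .
\]
It therefore suffices to bound $\norm{P_n\mathbf 1}_\infty$ uniformly in $n$, in $\eps$ and in the densities $h_i$. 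Note that this step does not really use $f\in\bv$; it only needs $f\in L^\infty$.

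\emph{Step 2 (iterating the Lasota--Yorke inequality).} The constants $\vartheta,C_{\mathrm{ap}}$ of Proposition~\ref{prop:LYBVapp} are uniform in $(\eps,h)$, so the inequality can be iterated along any sequence of densities. Each $\Lop{h_i}$ preserves $\norm{\cdot}_1$ on non-negative functions, so $\norm{P_\ell\mathbf 1}_1=1$ for every $\ell$. Induction then gives
\[
\norm{P_n\mathbf 1}_{1,1/p}\le\vartheta^n\norm{\mathbf 1}_{1,1/p}+C_{\mathrm{ap}}\sum_{\ell=0}^{n-1}\vartheta^\ell
\le 1+\frac{C_{\mathrm{ap}}}{1-\vartheta}.
\]
Here we use $\norm{\mathbf 1}_{1,1/p}=1$, which holds because constants have zero oscillation.

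\emph{Step 3 (embedding into $L^\infty$).} Apply the estimate $\norm g_\infty\le\hat C_\infty\norm g_{1,1/p}$ already used in Proposition~\ref{prop:inv}. If one wants to justify it: for a.e.\ $x$,
\[
\abs{g(x)}\le \frac{1}{m(S_{\rho_0}(x))}\int_{S_{\rho_0}(x)}\abs g\,dm+\osc(g,\rho_0,x).
\]
The quantity $\osc(g,\rho_0,x)$ is in turn controlled by averaging $\osc(g,2\rho_0,\cdot)$ over a neighbourhood of $x$ of length comparable to $\rho_0$, which yields a bound by $C\rho_0^{-1}\osc_1(g,2\rho_0)$. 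Then use $\osc_1(g,2\rho_0)\le 2\osc_1(g,\rho_0)$ (or simply shrink $\rho_0$), as in \cite{Ke85}. This gives $\hat C_\infty$, depending only on $\rho_0$.

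Combining the three steps yields the proposition with
\[
\hat M:=\hat C_\infty\Bigl(1+\frac{C_{\mathrm{ap}}}{1-\vartheta}\Bigr),
\]
which is independent of $n$, $\eps$ and the $h_i$. We may enlarge $\hat M$ to ensure $\hat M\ge1$.

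The only delicate point is Step 3, the uniform embedding constant $\hat C_\infty$; I would simply cite Keller's lemma for it rather than reprove it. The key idea that avoids trouble is Step 1. Bounding $\norm{P_nf}_{1,1/p}$ directly would only control $\norm{P_nf}_\infty$ by $\norm f_{1,1/p}$, which is not the statement. Positivity replaces $f$ by the fixed function $\mathbf 1$, whose $\bv$-norm is $1$.
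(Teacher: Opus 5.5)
Your proposal is correct and follows the paper's proof essentially verbatim: iterate the uniform Lasota--Yorke inequality of Proposition~\ref{prop:LYBVapp} on $\mathbf 1$ (using $\norm{\Lop{h}g}_1\le\norm g_1$), pass to $L^\infty$ via $\norm{\cdot}_\infty\le\hat C_\infty\norm{\cdot}_{1,1/p}$, and use positivity to get $\abs{\Lop{h_{n}}\cdots\Lop{h_{1}}f}\le\norm f_\infty\,\Lop{h_{n}}\cdots\Lop{h_{1}}\mathbf 1$, which gives the same constant $\hat M=\hat C_\infty\bigl(1+C_{\mathrm{ap}}/(1-\vartheta)\bigr)$. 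The only difference is that you also sketch the embedding $\bv\hookrightarrow L^\infty$, which the paper takes for granted from \cite{Ke85}.
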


\begin{proof}
By Proposition~\ref{prop:LYBVapp} and $\norm{\Lop hf}_{1}\le\norm f_{1}$,
iterating gives
$\norm{\Lop{h_{n}}\cdots\Lop{h_{1}}f}_{1,1/p}
\le\vartheta^{n}\norm f_{1,1/p}+\tfrac{C_{\mathrm{ap}}}{1-\vartheta}\norm f_{1}$.
For $f\equiv\mathbf1$ this is bounded by a uniform $M_{0}'$; since
$\norm\cdot_{\infty}\le\hat C_{\infty}\norm\cdot_{1,1/p}$,
$\norm{\Lop{h_{n}}\cdots\Lop{h_{1}}\mathbf1}_{\infty}\le
\hat C_{\infty}M_{0}'=:\hat M$. For general $f$, positivity gives
$\abs{\Lop{h_{n}}\cdots\Lop{h_{1}}f}\le
\norm f_{\infty}\Lop{h_{n}}\cdots\Lop{h_{1}}\mathbf1$, whence the
claim.
\end{proof}

\subsection{Proof of Proposition~\ref{prop:LY}}\label{app:B}
Here we prove the main technical result of this work: Proposition~\ref{prop:LY}. 

First of all, notice that the local part (A8)(i) provides a constant $C\ge1$ such
that, for every $j$ and every $x\in U_{j}\setminus\{c_{j}\}$,
\begin{equation}\label{eq:A8app}
\abs{T''(x)}\le C\abs{x-c_{j}}^{\beta_{j}-1},\quad
\abs{T'''(x)}\le C\abs{x-c_{j}}^{\beta_{j}-2},\quad
\abs{T'(x)}\ge C^{-1}\abs{x-c_{j}}^{\beta_{j}} ,
\end{equation}
while (A8)(ii) together with (A4) provides the uniform bounds
\begin{equation}\label{eq:A8far}
\abs{T''(x)}\le C,\qquad
\abs{T'''(x)}\le C,\qquad
\abs{T'(x)}\ge\theta>1,
\qquad x\in V .
\end{equation}
Recall from \eqref{eq:r0} that $U_{1},\dots,U_{k}$ are pairwise disjoint and that
$V=[0,1]\setminus\bigcup_{j}U_{j}$, so that \eqref{eq:A8app} and
\eqref{eq:A8far} together cover all of $[0,1]\setminus\{c_{1},\dots,c_{k}\}$.
Only $\beta_{j}<-\tfrac12$ is used below.

Fix $(\eps,h)$ and write $T:=T_{\eps,h}$, $\LT:=\Lop h$. By
(A1), (A3) each $T_{i}:=T|_{I_{i}}$ is strictly monotone; set
$J_{i}:=T_{i}(I_{i})$, $g_{i}:=T_{i}^{-1}\in C^{3}(J_{i})$ with sign
$s_{i}=\operatorname{sgn}g_{i}'$, and
\begin{equation}\label{eq:ginv}
g_{i}'=\frac{1}{T'\circ g_{i}},\qquad
g_{i}''=-\frac{T''\circ g_{i}}{(T'\circ g_{i})^{3}},\qquad
g_{i}'''=-\frac{T'''\circ g_{i}}{(T'\circ g_{i})^{4}}
+\frac{3(T''\circ g_{i})^{2}}{(T'\circ g_{i})^{5}},
\end{equation}
with $\abs{g_{i}'}\le\theta^{-1}$. The transfer operator can be written as 
\begin{equation}\label{eq:explicit}
\LT f=\sum_{i=0}^{k}\phi_{i},\qquad
\phi_{i}:=\bigl((f\circ g_{i})\abs{g_{i}'}\bigr)\mathbf1_{J_{i}}
\ \text{(extended by $0$)}.
\end{equation}
Moreover $\LT$ is positive,
$\norm{\LT f}_{1}\le\norm f_{1}$ and $\abs{\LT f}\le\LT\abs f$. By
(A2), every endpoint of every $J_{i}$ is either a
\emph{singular value} $a_{j}$ (the image of a singularity
$c_{j}\in\partial I_{i}$) or the boundary point
$0=T(0^{+})=T(1^{-})\in\partial[0,1]$.

\subsection{Distortion estimates}

Recall $r_{0}$, $U_{j}$ and $V$ from \eqref{eq:r0}, and set
\[
W:=\frac{\abs{T''}}{\abs{T'}^{2}},\qquad
E_{1}:=\frac{\abs{T''}}{\abs{T'}^{3}},\qquad
E_{2}:=\frac{\abs{T'''}}{\abs{T'}^{3}}
+\frac{\abs{T''}^{2}}{\abs{T'}^{4}} .
\]

\begin{sublemma}\label{sub:weights}
There are uniform $M_{1},M_{2},M_{3}<\infty$ with:
\begin{itemize}
    \item[(a)] $\norm W_{L^{2}(0,1)}\le M_{1}$;
    \item[(b)] $\norm{E_{1}}_{\infty}\le M_{2}$ and
$E_{1}(x)\to0$ as $x\to c_{j}^{\pm}$, every $j$;
\item[(c)] $\norm{E_{2}}_{L^{1}(0,1)}\le M_{3}$;
\item[(d)] $1/\abs{T'(x)}\to0$ as $x\to c_{j}^{\pm}$, every $j$.
\item[(e)]Moreover $\operatorname{osc}_{1}(1/\abs{T'},\rho)\le C\rho$ uniformly
for $0<\rho\le\rho_{0}$.
\end{itemize}
\end{sublemma}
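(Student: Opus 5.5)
The plan is to split $[0,1]\setminus\{c_1,\dots,c_k\}$ into the far region $V$ and the punctured neighbourhoods $U_j\setminus\{c_j\}$. On $V$ every bound follows directly from \eqref{eq:A8far}: $W\le C\theta^{-2}$, $E_1\le C\theta^{-3}$ and $E_2\le C\theta^{-3}+C^2\theta^{-4}$, all bounded uniformly. These bounded pieces contribute uniform amounts to the $L^2$, $L^\infty$ and $L^1$ norms, because $V$ has measure at most $1$. On each $U_j$ we use \eqref{eq:A8app} and write $t:=\abs{x-c_j}\in(0,r_0)$. Then each weight is bounded by a constant times a power of $t$, and we only have to check the exponents.

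On $U_j$ the local bounds give $W\le C^3 t^{\beta_j-1-2\beta_j}=C^3t^{-\beta_j-1}$. Since $-\beta_j-1>-1/2$ when $\beta_j<-1/2$, the function $t^{-2\beta_j-2}$ is integrable near $0$, which gives (a). Similarly $E_1\le C^4 t^{\beta_j-1-3\beta_j}=C^4t^{-2\beta_j-1}$. Here $-2\beta_j-1>0$, because $\beta_j<-\tfrac12$. So $E_1$ is bounded on $U_j$ by $C^4 r_0^{-2\beta_j-1}\le C^4$ and tends to $0$ as $t\to0$, which is (b).

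For (c), the first summand of $E_2$ is $\le C^4 t^{\beta_j-2-3\beta_j}=C^4t^{-2\beta_j-2}$. The second summand is $\le C^6 t^{2\beta_j-2-4\beta_j}=C^6t^{-2\beta_j-2}$. Both are integrable exactly when $-2\beta_j-2>-1$, that is, $\beta_j<-\tfrac12$. Their integrals over $U_j$ are bounded by $2C^6 r_0^{-2\beta_j-1}/(-2\beta_j-1)$. Since the $\beta_j$ lie in a fixed window, $-2\beta_j-1\ge -2\beta_*-1>0$, so this constant is uniform. Part (d) is immediate from $1/\abs{T'}\le C t^{-\beta_j}\to0$, because $-\beta_j>0$. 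All constants depend only on $C$, $\theta$, $r_0$, $k$ and $\beta_*$, so they are uniform in $(\eps,h)$.

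For (e) we set $w=1/\abs{T'}$, extended by $0$ at each $c_j$. By (d) this extension is continuous. On each branch interval, $w$ is absolutely continuous with $\abs{w'}=W$, and by (a) this is in $L^2$, hence in $L^1$ with $\norm{W}_1\le M_1$. Continuity at the $c_j$ means there are no jumps, so $w\in W^{1,1}$ with $\norm{w'}_1\le M_1$. The embedding estimate recalled in Appendix~\ref{app:A}, namely $\osc(w,\rho,x)\le\int_{S_\rho(x)}\abs{w'}\,dm$, then gives $\osc_1(w,\rho)\le 2\rho\norm{w'}_1\le 2M_1\rho$.

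The only delicate step is justifying absolute continuity of $w$ across the singular points. This uses the vanishing limit in (d) together with integrability of $w'$ near $c_j$. Alternatively one can cite the H\"older bound of Lemma~\ref{lem:UBV}, but that yields only $\rho^{1+\alpha}$-type estimates per point, so the $W^{1,1}$ route is the one to use.
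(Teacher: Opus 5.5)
Your proof is correct. Parts (a)--(d) follow the paper's argument. You split into $V$ and the punctured $U_j$, bound $W$, $E_1$, $E_2$ and $1/\abs{T'}$ there by $t^{-(1+\beta_j)}$, $t^{-(1+2\beta_j)}$, $t^{-(2+2\beta_j)}$ and $t^{-\beta_j}$, and check the exponents against $\beta_j<-\tfrac12$. Your $L^1$ bound in (c) drops a factor $2$ coming from the two sides of $c_j$, which is harmless.

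For (e) you take a genuinely different route. The paper estimates $\osc(u,\rho,x)$ pointwise, splitting according to the size of $d(x)=\min_j\abs{x-c_j}$:
\begin{itemize}
\item when $d(x)\ge2\rho$, it uses $\osc(u,\rho,x)\le2\rho\sup_{S_\rho(x)}W\le C\rho\,d(x)^{-(1+\beta_*)}$ together with the integrability of $d^{-(1+\beta_*)}$;
\item when $d(x)\le2\rho$, it uses $\osc(u,\rho,x)\le2\sup_{S_\rho(x)} u\le C\rho^{-\beta_*}$ on a set of measure $O(k\rho)$, together with $1-\beta_*>1$.
\end{itemize}

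You instead observe that $w=1/\abs{T'}$, extended by $0$ at the $c_j$, is continuous across each cusp by (d). It also has $\abs{w'}=W\in L^{2}\subset L^{1}$ on each branch. Hence $w$ is absolutely continuous on $[0,1]$, so $w\in\W{1}$ with $\norm{w'}_1\le M_1$. The estimate $\osc_1(f,\rho)\le2\rho\norm{f'}_1$ recalled in Appendix~\ref{app:A} then gives (e) with the explicit constant $2M_1$.

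Your route is shorter and reduces (e) to (a), (d) and an estimate already in the paper. The price is the absolute-continuity argument across the singular points, which you correctly identify and justify via the vanishing limits and integrability of $w'$. The paper's route avoids that step: near $c_j$ it only uses the size of $u$, not its continuity.

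One slip in your closing aside: the H\"older bound of Lemma~\ref{lem:UBV} gives $\osc(w,\rho,x)\lesssim\rho^{\alpha}$ at each point, hence only $\osc_1(w,\rho)\lesssim\rho^{\alpha}$, not a $\rho^{1+\alpha}$-type bound. Your conclusion that this route is too weak for a linear bound is still right.
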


\begin{proof}
On $V$ the bounds \eqref{eq:A8far} give $\abs{T''}\le C$, $\abs{T'''}\le C$ and
$\abs{T'}\ge\theta>1$, whence
\[
W\le C\theta^{-2},\qquad E_{1}\le C\theta^{-3},\qquad
E_{2}\le C\theta^{-3}+C^{2}\theta^{-4}\qquad\text{on }V,
\]
so that $W,E_{1},E_{2}$ are uniformly bounded there. This is where the local
formulation of (A8) is used.

On $U_{j}$, with $t:=\abs{x-c_{j}}\in(0,r_{0})$, \eqref{eq:A8app}
gives
\begin{equation}\label{eq:peaks}
W\le C^{3}t^{-(1+\beta_{j})},\qquad
E_{1}\le C^{4}t^{-(1+2\beta_{j})},\qquad
E_{2}\le2C^{6}t^{-(2+2\beta_{j})}.
\end{equation}
Since $\beta_{j}<-\tfrac12$ we have $2(1+\beta_{j})<1$, so
$W^{2}$ is integrable around $c_{j}$, giving (a). Further, since 
$-(1+2\beta_{j})>0$,  $E_{1}$ is bounded on $U_{j}$ and tends to
$0$ at $c_{j}$, giving (b). Also, we have $2+2\beta_{j}<1$, which implies  $E_{2}$ is
integrable around $c_{j}$, giving (c). Item (d) follows from (A6). To  the
oscillation bound, we write $u:=1/\abs{T'}$. Then $\abs{u'}=W$ off the
singular set, so for $x$ with $d(x):=\min_{j}\abs{x-c_{j}}\ge2\rho$ letting
$\beta_{*}:=\max_{j}\beta_{j}<-\tfrac12$ we have 
$$\operatorname{osc}(u,\rho,x)\le2\rho\sup_{S_{\rho}(x)}W\le
C\rho\,d(x)^{-(1+\beta_{*})}.$$ Further, for $x$ with $d(x)\le2\rho$ holds 
$$\operatorname{osc}(u,\rho,x)\le2\sup_{S_{\rho}(x)}u\le
C\rho^{-\beta_{*}}$$ by \eqref{eq:A8app}. Integrating in $x$, using $1+\beta_{*}<1$  and
$1-\beta_{*}>1$ implies
\[
\operatorname{osc}_{1}(u,\rho)
\le Ck\rho\cdot\rho^{-\beta_{*}}
+C\rho\int_{\{d\ge2\rho\}}d(x)^{-(1+\beta_{*})}dx
\le C'\rho^{1-\beta_{*}}+C'\rho\le C''\rho.
\]
\end{proof}

\subsection{Proof of \eqref{eq:L2}}

For $g\in L^{\infty}$, letting $f=\mathbf1$ we obtain 
$\int(g\circ T)^{2}dm=\int\LT\mathbf1\cdot g^{2}dm$. Using the representation \eqref{eq:explicit} we obtain 
$\LT\mathbf1(y)=\sum_{x\in T^{-1}\{y\}}1/\abs{T'(x)}\le(k+1)/\theta$
($k+1$ branches, by (A4)). Hence
$\norm{g\circ T}_{2}\le((k+1)/\theta)^{1/2}\norm g_{2}$. For
positive $f\in L^{2}$ duality gives
\[
\norm{\LT f}_{2}
=\sup_{\substack{g\in L^{\infty},\,g\ge0\\ \norm g_{2}\le1}}
\int\LT f\cdot g\,dm
=\sup_{\substack{g\in L^{\infty},\,g\ge0\\ \norm g_{2}\le1}}\int f\cdot g\circ T\,dm
\le\Bigl(\tfrac{k+1}{\theta}\Bigr)^{1/2}\norm f_{2}.
\]
For arbitrary  $f\in L^{2}$  we use $\abs{\LT f}\le\LT\abs f$ and the above inequality. \qed

\subsection{Proof of \eqref{eq:LY1}}

Let $f\in\W1$.  On a compact subset $J_{i}$ of $V$ the composition $f\circ g_{i}$
is absolutely continuous (AC),
$\abs{g_{i}'}=s_{i}g_{i}'\in C^{1}$, so $\phi_{i}$ is locally AC with
\begin{equation}\label{eq:phiprime}
\phi_{i}'=s_{i}\bigl[(f'\circ g_{i})(g_{i}')^{2}
+(f\circ g_{i})g_{i}''\bigr]\quad\text{a.e.\ on }J_{i}.
\end{equation}

Changing variables $y=T_{i}(x)$ and using
\eqref{eq:ginv},
\begin{equation}\label{eq:t12}
\int_{J_{i}}\abs{f'\circ g_{i}}(g_{i}')^{2}dy
=\int_{I_{i}}\frac{\abs{f'}}{\abs{T'}}dm
\le\theta^{-1}\int_{I_{i}}\abs{f'}dm,
\qquad
\int_{J_{i}}\abs{f\circ g_{i}}\abs{g_{i}''}dy
=\int_{I_{i}}\abs f\,W\,dm .
\end{equation}

\emph{Boundary behaviour.} $\phi_{i}$ is locally AC on $J_{i}$ with
$L^{1}$ derivative, hence AC on $\overline{J_{i}}$ with one-sided
limits at the endpoints. At a singular endpoint $a_{j}$,
$g_{i}(y)\to c_{j}$ and
$\abs{\phi_{i}(y)}\le\norm f_{\infty}/\abs{T'(g_{i}(y))}\to0$ by
Sublemma~\ref{sub:weights}(d); every non-singular endpoint equals
$0\in\partial[0,1]$. Thus the zero-extension of $\phi_{i}$ is
continuous at every endpoint of $J_{i}$ interior to $(0,1)$ and has
integrable a.e.-derivative supported in $J_{i}$: it lies in
$\W1(0,1)$, with distributional derivative \eqref{eq:phiprime} and no
atoms.

 Summing \eqref{eq:t12} over $i$ and using
Cauchy--Schwarz with Sublemma~\ref{sub:weights}(a),
\[
\norm{(\LT f)'}_{1}
\le\theta^{-1}\norm{f'}_{1}+\int_{0}^{1}\abs f\,W\,dm
\le\theta^{-1}\norm{f'}_{1}+M_{1}\norm f_{2}.
\]
This is \eqref{eq:LY1} with $\lambda:=\theta^{-1}$, $M_{0}\ge M_{1}$.
\qed

\subsection{Proof of \eqref{eq:LY2}}

For $f\in\W2$ recall that 
$\norm f_{\infty},\norm{f'}_{\infty}\le\norm f_{\W2}$.

On compacts of $J_{i}$ both
summands of \eqref{eq:phiprime} are AC, and
\begin{equation}\label{eq:phisecond}
\phi_{i}''=s_{i}\bigl[(f''\circ g_{i})(g_{i}')^{3}
+3(f'\circ g_{i})g_{i}'g_{i}''
+(f\circ g_{i})g_{i}'''\bigr]\quad\text{a.e.\ on }J_{i}.
\end{equation}

 Changing variables and using \eqref{eq:ginv}:
\[
\int_{J_{i}}\abs{f''\circ g_{i}}\abs{g_{i}'}^{3}dy
=\int_{I_{i}}\frac{\abs{f''}}{\abs{T'}^{2}}dm
\le\theta^{-2}\int_{I_{i}}\abs{f''}dm;
\]
since $\abs{g_{i}'g_{i}''}\circ T\cdot\abs{T'}
=\abs{T''}/\abs{T'}^{3}=E_{1}$,
\[
3\int_{J_{i}}\abs{f'\circ g_{i}}\abs{g_{i}'}\abs{g_{i}''}dy
=3\int_{I_{i}}\abs{f'}E_{1}\,dm\le3M_{2}\int_{I_{i}}\abs{f'}dm;
\]
and since $\abs{g_{i}'''}\circ T\cdot\abs{T'}
\le\abs{T'''}/\abs{T'}^{3}+3\abs{T''}^{2}/\abs{T'}^{4}\le3E_{2}$,
\[
\int_{J_{i}}\abs{f\circ g_{i}}\abs{g_{i}'''}dy
\le3\norm f_{\infty}\int_{I_{i}}E_{2}\,dm .
\]

Finally we show that  $\phi_i\in \W2$. Since $\phi_{i}'$ is locally AC on $J_{i}$
with $L^{1}$ derivative, it has one-sided limits at the endpoints
of $J_{i}$. At a singular endpoint $a_{j}$ both terms of
\eqref{eq:phiprime} vanish in the limit:
$$\abs{(f'\circ g_{i})(g_{i}')^{2}}
\le\norm{f'}_{\infty}/\abs{T'\circ g_{i}}^{2}\to0$$ by
Sublemma~\ref{sub:weights}(d), and
$$\abs{(f\circ g_{i})g_{i}''}
\le\norm f_{\infty}\,E_{1}(g_{i}(y))\to0$$ by
Sublemma~\ref{sub:weights}(b). Since $\beta_{j}<-\tfrac12$ the exponent
$-(1+2\beta_{j})$ of $E_{1}$ near $c_{j}$ is positive. Every non-singular endpoint lies on
$\partial[0,1]$. As in the proof of \eqref{eq:LY1}, applied to $\phi_{i}'$,
the zero-extension of $\phi_{i}'$ lies in $\W1(0,1)$ with
distributional derivative \eqref{eq:phisecond} and no atoms; hence
the zero-extension of $\phi_{i}$ lies in $\W2(0,1)$.

Summing over $i$ we obtain
\[
\norm{(\LT f)''}_{1}
\le\theta^{-2}\norm{f''}_{1}+3M_{2}\norm{f'}_{1}
+3M_{3}\norm f_{\infty}
\le\lambda^{2}\norm{f''}_{1}+(3M_{2}+3M_{3})\norm f_{\W1},
\]
Combining the above estimate with  $\norm{\LT f}_{1}\le\norm f_{1}$
and the estimate \eqref{eq:LY1}  we obtain
\[
\begin{aligned}
\norm{\LT f}_{\W2}
&=\norm{\LT f}_{1}+\norm{(\LT f)'}_{1}+\norm{(\LT f)''}_{1}\\[2pt]
&\le\norm f_{1}
+\bigl(\lambda\norm{f'}_{1}+M_{1}\norm f_{2}\bigr)
+\bigl(\lambda^{2}\norm{f''}_{1}+3M_{2}\norm{f'}_{1}+3M_{3}\norm f_{\infty}\bigr)\\[2pt]
&\le\lambda^{2}\norm f_{\W2}
+\bigl(1+\lambda+M_{1}+3M_{2}+3M_{3}\bigr)\norm f_{\W1},
\end{aligned}
\]
%where we used $\norm f_{2}\le\norm f_{\infty}\le\norm f_{\W1}$ and
%$\norm f_{1},\norm{f'}_{1}\le\norm f_{\W1}$ together with
%$\norm{f''}_{1}\le\norm f_{\W2}$. 
Letting 
\begin{equation}\label{eq:M0}
M_{0}:=1+\lambda+M_{1}+3M_{2}+3M_{3},
\end{equation}
finishes the proof of   \eqref{eq:LY2}. Notice that the constant $M_{0}$ is uniform in $(\eps,h)$ and satisfies $M_{0}\ge M_{1}$, so that the same
constant may be used in \eqref{eq:LY1}. \qed

\section{Acknowledgments}
The authors thank Wael Bahsoun for suggesting this problem and his continuous support during the course of the work. Also, we thank Stefano Galatolo for suggesting a generalization of the family  of base maps  to have several singular points. We also thank anonymous referees for pointing out several mistakes in the previous version.  

\section{Funding information}
MR was supported by Research Project No. FL-9524115148 of the Ministry of Higher Education, Science and Innovation of the Republic of Uzbekistan. 

\section{Data Availability:} No datasets were generated or analysed during the current study.

\medskip\section{Declarations}

\medskip\textbf{Competing Interests:} The authors declare no competing interests.

\medskip\textbf{Ethics Declaration:} Not applicable.

\end{document}